\theoremstyle{plain}
\newtheorem{theorem}{Theorem}
\newtheorem{lemma}{Lemma}
\newtheorem{corollary}{Corollary}
\theoremstyle{definition}
\newtheorem{definition}{Definition}
\newtheorem{example}{Example}
\newtheorem{remark}{Remark}
\theoremstyle{plain}
\newtoks\thehProclaim
\newtheorem*{Proclaim}{\the\thehProclaim}
\theoremstyle{definition}
\newtoks{\thehRemark}
\newtheorem*{Remark}{\the\thehRemark}
\renewcommand{\leq}{\leqslant}
\renewcommand{\geq}{\geqslant}
\DeclareMathOperator{\diam}{diam}
\DeclareMathOperator{\dist}{dist}
\DeclareMathOperator{\V}{Vol}
\DeclareMathOperator{\Ir}{Ir}
\newcommand*{\Hn}{\mu}
\newcommand*{\capU}[1]{\partial_\Gamma^1{#1}}
\newcommand*{\capD}[1]{\partial_\Gamma^2{#1}}
\newcommand*{\capN}[2]{\partial_\Gamma^{#1}{#2}}
\newcommand*{\trU}[1]{{{#1}^*}}
\newcommand*{\trD}[1]{{{#1}_*}}
\newcommand*{\trN}[2]{{{#1}^{{#2}}}}
\newcommand*{\tracenorm}[1]{||#1||_\Gamma}
\newcommand{\BVO}{{BV (\Omega)}}
\newcommand{\D}{\partial}
\newcommand{\Rn}{\mathbb{R}^n}
\newcommand{\R}{\mathbb{R}}
\newcommand{\ep}{\epsilon}
\begin{document}
\title{Trace of $BV$-functions on irregular subsets}
\author{Yu.D.~Burago
and N.N.~Kosovskiy
\footnote{The paper partially supported by the grant RFBR 08-01-00079a}}
\date{}
\maketitle 
\section{Introduction.} The purpose of the paper is to generalize all the main results
on  boundary trace
of the book \cite{M}, Chapter 6,  to a  wider class of sets.
This chapter is an extended
%improved
version of the earlier publication \cite{BM}. Our paper is an extended and completed
version of our publication  \cite{BK}, where some results were presented
without proofs or in a weaken form.
In \cite{BM}, \cite{M}, boundary trace was defined for regions
$\Omega$ with finite perimeter (in the sense of Cacioppoli--De Giorgi)
and the main results about trace were obtained under an additional assumption that
normals in the sense of Federer exist almost everywhere on
 $\D\Omega$. Instead of that, here we suppose that
$\D\Omega$ is a countably (n-1)-rectifiable set, which is a more general
condition.
Readers can get acquainted with the theory of sets of finite perimeter and
$BV$ functions in the books  \cite{M},
 \cite{Dzh}, \cite{Ziem}.

The  analytical tools we use  are basically the same
as  in  \cite{BM}, \cite{M}. Relations between isoperimetric inequalities and
integral inequalities (of Sobolev embedding
theorems type) play an essential role. First these relations were discovered by V.~Maz'ya
\cite{M1}.
Almost all results formulated below are valid not only for regions in  $\mathbb{R}^n$
but for regions on $C^1$-smooth $n$-dimensional manifolds as well. This becomes clear
from Corollary \ref{cor_generalization}.

In fact, deep knowledge in geometric measure
theory, in particular, in rectifiable currents  is not necessary.
All necessary (very restricted) information from this theory are given below.

Let us explain the reason for our results to generalize  those  in \cite{BM}.
It is known that the boundary  $\D E$ of a set  $E\subset\Rn$ with a finite perimeter
consists of two parts. One of them, so caller reduced boundary $\D^*E$,
consists of all points at which normals in the sense of Federer exist.
It is known that this part is a countably $(n-1)$-rectifiable  set. The perimeter $P(E)$
of a set $E$ equals $H_{n-1}(\D^*E)$, where $H_k$ is $k$-dimensional Hausdorff measure.
So the requirement, that the normals in the sense of Federer exist a.e. on  $\D^*\Omega$
is equivalent to the condition that $\D\Omega=\D^*\Omega$. For sure, all sets
are considered up to sets of $(n-1)$-dimensional Hausdorff measure zero.

In general, $\D E\setminus\D^* E$ consist of two parts, a countably $(n-1)$-rectifiable set
and so called completely unrectified (irregular) set $\Ir(E)$. The latter may have either
finite,
or infinite $(n-1)$-dimensional Hausdorff measure.
The assumption that  $\D\Omega$ is a countably $(n-1)$-rectifiable  set means that
the set  $\Ir$ is empty. However even in this case the countably rectifiable set
$\D\Omega$ can be essentially vaster than
 $\D^* E$.

Let us explain this situation by the following example. Consider  an open
disk  in a plane with a sequence of intervals $I_i$
removed. Suppose that the union of these intervals is closed.
The results of \cite{BM} on boundary traces
are not applicable to such a region $\Omega$ (the intervals do not belong to the reduced
boundary) but the boundary of $\Omega$ is a countably $1$-rectifiable set.

 Note by the way that even for a smooth  function on $\Omega$
its limits at the  points of the intervals $I_i$ from right and
left can  be different, so that
it is reasonable to introduce traces with two different values
in some points.

{\bf Notations.}
Denote by $A\Delta B$  the symmetric difference  $(A\setminus B)\cup(B\setminus A)$
 of
$A$ and $B$. $H_k$ denotes the $k$-dimensional Hausdorff measure and
$\V (A)$ denotes the  Lebesgue measure of  $A\subset\Rn$ or, equivalently,
its $n$-dimension Hausdorff measure.

The dimension $k=n-1$ will
play a special role for us and to be
short we denote $H_{n-1}=\mu$.
From here on words ``almost all'', ``measurable'', etc, will be used
with respect either to  $H_n$, or to  $H_{n-1}=\mu$, it will be clear
from the context to which one.

Denote by $B_p(r)$ the open ball of radius  $r$ centered at $p$
and by $\bar B_p(r)$ its closure.

$\Theta_A(p,k)$ denotes density with respect the measure $H_k$ of a set
$A$ at $p$; i.e.,
$$\Theta_A(p,k)= \lim_{r\to 0}v_k^{-1} r^{-k}H_k(A\cap B_p(r)),$$
where $v_k$ is the volume of the unit ball in $\mathbb{R}^k$. Note, that
we use  basically not densities, but  one-sided densities in the paper, see the next section.
\medskip

{\bf Countably rectifiable sets.}
There are several equivalent definitions of countably $(k, H_k)$-rectifiable sets.
One can find a detailed exposition in H.~Federer's monograph \cite{Fed},
Chapter 3, and more specifically 3.2.19, 3.2.25, 3.2.29.

The following definition is the most convenient for our purposes
\begin{definition}
The measurable set $A\subset\Rn$ is called countably  $(k,H_k)$-rectifiable if
there exists  a sequence of $C^1$-smooth $k$-dimensional surfaces $M_i$, $i=1,2,\dots$,
such that $A$ can be decomposed $A=\bigcup_{i=o}^\infty A_i$, where $\mu(A_0)=0$ and
$A_i\subset M_i$ for $i>0$. Moreover, the sets $A_i$ can be chosen such that
the following conditions hold:
\begin{equation}
\label{density_one}
\Theta_A(p,k)=0, \qquad
\Theta_{(A\setminus A_i)}(p,k)=1
\end{equation}
for almost all $p\in A_i$.
\end{definition}

We need the case $k=n-1$ only, so we call  countably
$(n-1,\mu)$-rectifiable set
{\em countably rectifiable} to be short.

Any countably rectifiable set $A$ has almost everywhere so called the approximative
tangent $(n-1)$-plane $T_pA$, which coincides with the tangent plane to
$M_i$ at$p$.
A point at which $T_pA$ exists and, in addition, equality \eqref{density_one} holds
is called the regular point. Thus, almost all (by measure $\mu$) points of $A$
are regular. We drop a definition of $T_pA$ because we need only the following its property:
for every sequence of positive numbers $r_j\to 0$, there exist positive numbers
 $\epsilon_j\to 0$ such that
\begin{equation}
\label{approx_plane}
\lim_{r_j\to 0}r^{1-n}\mu(B_p(r_j)\setminus L_{r_j\ep_j})=0,
\end{equation}
where $L_\delta$ is the $\delta$-neighborhood of  $T_pA$.
If $\nu$ is a normal to $T_pA$ at $p$ we will say that 
$\nu$ is a normal to $A$ at $p$.
\medskip

{\bf Functions.} As usually,
$BV(\Omega)$ means the class of locally summable  in  $\Omega$
functions such that their gradients are vector charges.
Denote by $\chi (E)$ the characteristic function of  $E$ and by
$P_\Omega(E)$ the perimeter of
$E\subset \Omega$; i.e.,  $P_\Omega(E)=\|\chi_E\|_{BV(\Omega)}$.
(We use notation
$\|f\|_{BV(\Omega)}=\text{var }\text{grad}f(\Omega)$.)
For more details see \cite{M}, \cite{BM}, \cite{Ziem}, \cite{Dzh}.

We will need the Fleming--Rishel formula
 \cite{FR}
\begin{equation}
\label{Flem-Rish_formula_0}
\|f\|_{BV(\Omega)}=\int_{-\infty}^{\infty}P_\Omega(E_t)\,dt,
\end{equation}
where $f\in BV(\Omega)$,
$E_t=\{x \mid f(x)>t\}$, and also the following formula closely connected with it
\begin{equation}
\label{Flem-Rish_formula}
\nabla f(E)=\int_{-\infty}^{+\infty}\nabla \chi_{E_t}(E)\, dt,
\end{equation}
where  $E$ is ant measurable subset of
 $\Omega$, see for instance Theorem 14 in \cite{BM} or Lemma  6.6.5/1 in \cite{M}.

 \begin{remark}
 \label{loc_perim}
 We will often consider sets $E$ for which $P_\Omega (E)<\infty$.
 For instance, it can be sets $E_t$ of points where a function $f$ greater than $t$.
 If considerations are local then the  finiteness perimeter condition can be
 replaced by the assumption that a set $E\cap\Omega$ has
 \emph{locally finite perimeter}; i.e.,  $P_{\Omega\cap Q} (E)<\infty$
 for any bounded region $Q$.
\end{remark}

\section{One-sided densities}
Let us consider a measurable set $E\subset\Rn$. Let  $\nu$ be a unit vector
at a point $x\in \Rn$.
Denote
$B^{\nu}_x(r)=B_x(r)\cap\{y\mid (y-x)\nu\geq 0\}$.
The limit  $$\Theta_E^{\nu}(x)=
\lim_{r\to 0}2v_n^{-1} r^{-n}H_n(B^{\nu}_x(r)\cap E).$$
is called
\emph{one-sided density} of the set $E$ at $x$ with respect to
$\nu$.

Upper and lower one-sided densities
$\overline {\Theta}_E^{\nu}(x)$,
$\underline{\Theta}_E^{\nu}(x)$
are defined analogically as upper and lower limits.
Now let $x$ be a regular point of the countably rectifiable set  $A$.
Then there are two normals to $A$ at $x$ and, correspondingly, it is naturally
to consider two one-sided densities with respect to  $A$, namely $\Theta_E^{\nu}(x)$ and
$\Theta_E^{-\nu}(x)$.

We often consider the boundary of  $\Omega$ in the capacity of $A$ assuming that
the boundary is a countably rectifiable set. In such cases we suppose usually that
$E\subset\Omega$.

\begin{remark}
\label{Rem_set_halfspace}
It is easy to see that if a set  $G$ is measurable and $\Theta^\nu_G(x)=1$ then
\begin{equation}
\Theta^\nu_E(x)=
\lim_{r\to 0}
\frac
{H_n(B_x^\nu(r)\cap G\cap E)}
{H_n(B_x^\nu(r)\cap G)}
=
\lim_{r\to 0}2v_n^{-1}r^{-n}{H_n(B_x^\nu(r)\cap G\cap E)}.
\end{equation}
\end{remark}

The following statement is a simple corollary of the isoperimetric inequality for
subsets of a ball.

\begin{lemma}
Let $E$ be an  measurable set with a finite perimeter,
%а $Q$ --- сегмент
$Q=\{x\in\Rn\mid \sum x_i^2<1,\,\, a<x_n<1\}$, where $a\leq 1/2$.
Then the following isoperimetric inequality holds
\begin{equation}
\label{isoper}
\min\{H_n(Q\cap E), \, H_n(Q\setminus E)\}
 \leq c_n P_Q(E)^{\frac{n}{n-1}},
 \end{equation}
 where  $c_n>0$ depends on dimension only.
\end{lemma}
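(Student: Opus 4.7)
The plan is to reduce this relative isoperimetric inequality on the spherical cap $Q$ to the standard isoperimetric (or equivalently Sobolev--Poincar\'e) inequality on a fixed bounded Lipschitz domain, exploiting the fact that the geometry of $Q$ is uniformly controlled once $a\le 1/2$. The first step is the geometric observation that $Q$ is a bounded convex open set contained in $B_0(1)$, and since $a\le 1/2$ it contains the cap $B_0(1)\cap\{x_n>1/2\}$, whose volume and inradius are bounded below by constants depending only on $n$. In particular, $Q$ is bi-Lipschitz equivalent to a fixed reference domain (say the unit ball or the upper half-ball) with constants depending only on $n$.

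The core of the argument is then to apply the Sobolev--Poincar\'e inequality for $BV$ functions on a bounded Lipschitz domain: there exists $C_n>0$ such that for every $f\in BV(Q)$ one can choose a real number $m$ (a median of $f$) with
\begin{equation*}
\|f-m\|_{L^{n/(n-1)}(Q)} \le C_n\,\|f\|_{BV(Q)}.
\end{equation*}
I would apply this to $f=\chi_E$. The optimal median equals $0$ when $H_n(Q\cap E)\le H_n(Q\setminus E)$ and $1$ otherwise, so the left-hand side becomes $\min\{H_n(Q\cap E),H_n(Q\setminus E)\}^{(n-1)/n}$, while $\|\chi_E\|_{BV(Q)}=P_Q(E)$. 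Raising the resulting inequality to the power $n/(n-1)$ yields \eqref{isoper} with $c_n=C_n^{n/(n-1)}$.

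The main obstacle is ensuring that $C_n$ really depends on $n$ alone, uniformly in the parameter $a$. I would handle this either by invoking the bi-Lipschitz equivalence noted above (under which $BV$ norms and $L^{n/(n-1)}$ norms transform with controlled constants), or by the direct reduction suggested in the remark preceding the lemma: given $E\subset Q$, reflect $E$ across the hyperplane $\{x_n=a\}$ to obtain an extension $\tilde E$ inside a fixed ball $B$ containing both $Q$ and its reflection; apply the classical relative isoperimetric inequality on $B$ to $\tilde E$, noting that by the reflection symmetry the hyperplane $\{x_n=a\}$ carries no perimeter of $\tilde E$ in $B$, so $P_B(\tilde E)\le 2\,P_Q(E)$ and $H_n(B\cap\tilde E)\ge H_n(Q\cap E)$. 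The same estimate applied to the complement then gives \eqref{isoper} with a purely dimensional constant.
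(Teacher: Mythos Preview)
The paper does not actually give a proof of this lemma; it only remarks that it is ``a simple corollary of the isoperimetric inequality for subsets of a ball'' and states the result. Your main approach via the $BV$ Sobolev--Poincar\'e inequality on a bounded Lipschitz domain, specialized to $f=\chi_E$, is correct and is exactly the standard way to flesh out such a remark. The uniformity in $a\le 1/2$ is genuine: $Q$ is convex with $\diam Q\le 2$ and $H_n(Q)$ bounded below by a dimensional constant (since $Q$ contains the fixed cap $\{x_n>1/2\}\cap B_0(1)$), and for convex domains the Poincar\'e constant depends only on $\diam(Q)^n/H_n(Q)$; alternatively your bi-Lipschitz reduction to a fixed reference domain works just as well.

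Your alternative reflection argument, however, has a gap. After reflecting $E$ across $\{x_n=a\}$ to form $\tilde E\subset Q\cup\sigma(Q)$ and choosing a ball $B\supset Q\cup\sigma(Q)$, you assert $P_B(\tilde E)\le 2\,P_Q(E)$. The reflection symmetry does kill any perimeter on the flat face $\{x_n=a\}$, but it does nothing about the \emph{spherical} pieces of $\partial Q$ and $\partial\sigma(Q)$, which lie in the interior of $B$. Wherever $E$ meets the spherical part of $\partial Q$ with full one-sided density, $\tilde E$ acquires reduced boundary there inside $B$, and this contributes to $P_B(\tilde E)$ an amount not controlled by $P_Q(E)$. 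So the inequality $P_B(\tilde E)\le 2\,P_Q(E)$ fails in general, and the reflection route as written does not close. Your first argument is the one to keep.
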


\begin{lemma}
Let the boundary of a  region $\Omega$ is a countably rectifiable set. Then
either $\Theta_{\Omega}^{\nu}(x)=1$, or $\Theta_{\Rn\setminus\Omega}^{\nu}(x)=1$
at each regular point $x\in \D\Omega$ and for every normal $\nu(x)$ to $\D\Omega$.
\end{lemma}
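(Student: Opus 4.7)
The plan is to combine the isoperimetric inequality of the previous lemma with the approximate tangent plane property \eqref{approx_plane}. After translating and rotating we may assume $x=0$ and $\nu=e_n$, so the approximate tangent plane $T_0\partial\Omega$ equals $\{y_n=0\}$ and the strip $L_\delta$ from \eqref{approx_plane} coincides with $\{|y_n|<\delta\}$. Fix an arbitrary sequence $r_j\to 0^+$ and invoke \eqref{approx_plane} with $A=\partial\Omega$ to pick $\epsilon_j\to 0$, $\epsilon_j<1/2$, with $\mu\bigl(\partial\Omega\cap B_0(r_j)\setminus L_{r_j\epsilon_j}\bigr)=o(r_j^{n-1})$.

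Rescale by $r_j^{-1}$: put $\Omega_j:=r_j^{-1}\Omega$ and $Q_j:=\{\sum y_i^2<1,\ \epsilon_j<y_n<1\}$. Then $Q_j$ is a cap of the form required by the previous lemma and is disjoint from $L_{\epsilon_j}$, so $\partial\Omega_j\cap Q_j\subset\partial\Omega_j\cap B_0(1)\setminus L_{\epsilon_j}$. Scaling of $H_{n-1}$ yields
\[
P_{Q_j}(\Omega_j)\leq \mu(\partial\Omega_j\cap Q_j)\leq r_j^{1-n}\,\mu\bigl(\partial\Omega\cap B_0(r_j)\setminus L_{r_j\epsilon_j}\bigr)\longrightarrow 0,
\]
and the previous lemma gives $\min\{H_n(Q_j\cap\Omega_j),\,H_n(Q_j\setminus\Omega_j)\}\leq c_n\,P_{Q_j}(\Omega_j)^{n/(n-1)}\to 0$. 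Since $H_n(Q_j)\to v_n/2=H_n(B_0^\nu(1))$ and $H_n(B_0^\nu(1)\setminus Q_j)\to 0$ as $\epsilon_j\to 0$, passing to a subsequence we obtain
\[
\alpha_j:=\frac{2 H_n(B_0^\nu(r_j)\cap\Omega)}{v_n r_j^n}=\frac{H_n(B_0^\nu(1)\cap\Omega_j)}{H_n(B_0^\nu(1))}\longrightarrow 0\ \text{or}\ 1.
\]

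Thus every sequence $r_j\to 0^+$ admits a subsequence along which the ratio defining $\Theta_\Omega^\nu(0)$ tends to $0$ or to $1$, i.e. the only possible accumulation points are $0$ and $1$. The function $r\mapsto H_n(B_0^\nu(r)\cap\Omega)/H_n(B_0^\nu(r))$ is continuous for $r>0$, so if both $0$ and $1$ occurred as subsequential limits as $r\to 0$, then by the intermediate value theorem there would be arbitrarily small $r$ with ratio $1/2$, contradicting the dichotomy. Hence the ratio has a single limit equal to $0$ or $1$, which via $\Theta_\Omega^\nu(0)+\Theta_{\mathbb{R}^n\setminus\Omega}^\nu(0)=1$ is exactly the stated alternative.

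The main obstacle is the bookkeeping of the rescaling: $\epsilon_j$ must be small enough that $Q_j$ essentially fills the upper half-ball (keeping the denominator for the density bounded below), yet \eqref{approx_plane} must simultaneously ensure that the $\epsilon_j$-strip contains almost all of $\partial\Omega\cap B_0(r_j)$, so that after dilation $P_{Q_j}(\Omega_j)\to 0$. The choice of $\epsilon_j$ furnished by \eqref{approx_plane} is precisely what reconciles these two requirements; once it is in hand, the isoperimetric inequality does the rest and the upgrade from subsequential dichotomy to a genuine limit is a standard intermediate-value argument.
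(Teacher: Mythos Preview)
Your proof is correct and follows essentially the same path as the paper's: both remove a thin slab around the approximate tangent plane from the half-ball, use \eqref{approx_plane} to make the relative perimeter of $\Omega$ in the remaining cap $o(r^{n-1})$, and then apply the isoperimetric inequality \eqref{isoper}. The only differences are cosmetic (you rescale to the unit ball, the paper works at scale $r_i$), except that you explicitly supply the intermediate-value step upgrading the subsequential $\{0,1\}$-dichotomy to an honest limit, which the paper's proof leaves implicit.
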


Note that for normals
$\nu$, $-\nu$, any combination of values 0 and 1 for one-sided densities are possible.
That can happen even on a set of positive $\mu$-measure.

\begin{proof} Let  $\nu$ be a normal at a regular point $x\in \D\Omega$.
Consider semi-balls  $B^{\nu}_i=B^{\nu}_x(r_i)$, where
 $r_i\to +0$ as $i\to\infty$. Denote by  $C_ i$ intersection of
 $\epsilon_i r_i$-neighborhood of the plane
 $T_x$ with $B^{\nu}_i$, $A_i=B^{\nu}_i\setminus C_ i$.
It is clear that $\V(C_i)<v_{n-1}\ep_ir_i^n$.
By   \eqref{approx_plane} the inequalities
$P_{A_i}\leq\mu (A_i\cap\D\Omega)< \epsilon r_i^{n-1}$ hold for large  $i$ and sufficiently
small $\ep_i$.
Now the lemma follows immediately from the isoperimetric inequality
 \eqref{isoper} applied to the region
$A_i$ and the set $A_i\cap \Omega$.
\end{proof}

\begin{example}
Consider a sequence of small bubbles (disjoint round balls) $B_{x_i}(r_i)$
located in the unit open ball $B_0(1)$. It is easy to choose
these bubbles in such a way that all the points  $p\in S_0(1)$
are the limits of some subsequences of the bubbles and,
 besides, there is no other limit points. In addition,  suppose that the
radii of these balls vanish so fast that $\sum_i r_i^{n-1}<\infty$.
Define $\Omega=\bigcup B_{x_i}(r_i)$. Its boundary is rectifiable.
This set is not connected but in dimensions
$n>2$, one can connect the bubbles by very thin tubules such that
the new set $\Omega$
(completed with bubbles) becomes a region with rectifiable boundary.
The sphere
$S_0(1)$ belongs to the boundary of  $\Omega$. So almost all the points of this sphere
are regular points of $\D\Omega$. However they do not belong to the reduced
boundary of
$\Omega$; i.e., the set
$S_0(1)\bigcap \D^*\Omega$ is empty.   Moreover,  bubbles can be
chosen in such a way that at every point $x$
of the sphere $S_0(1)$,
 the condition  $\Theta_{\Omega}^{\nu}(x)=0$ holds for every normal.
\end{example}

Denote by $\Gamma$ the set of all points $x\in\D\Omega$ such
that $\Theta^\nu_\Omega (x)=1$ for at least one normal $\nu$.
It is not difficult to see that  $\D^*\Omega\subset \Gamma$.
Indeed, the vector $\nu_F$ is the normal in the
sense of Federer if and only if
$\Theta_\Omega^{-\nu_F}(x)=1$ and
$\Theta_\Omega^{\nu_F}(x)=0$.

\begin{remark}
\label{finit_per_rect}
It is well known that $P(\Omega)=\Hn(\D^*\Omega)$.
Recall that if \mbox{$P(\Omega)<\infty$,} then
$\text{var}\nabla \chi_\Omega (\D\Omega\setminus\D^*\Omega)=0$ and
\begin{equation}
\label{integr_normal}
\nabla\chi_\Omega (E)=-\int_E \nu_F(x)\,\mu(dx)
\end{equation}
for any measurable set $E\subset\D^*\Omega$, see for instance  \cite{BM},
Theorem  6.2.2/1.
\end{remark}

\begin{lemma}
\label{normal_field}
Any countably rectifiable set $A$ can be equipped with а measurable field
 $\nu$ of (unit) normals.
\end{lemma}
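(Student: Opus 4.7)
The plan is to build $\nu$ directly from the rectifiable decomposition supplied by the definition. Take $A=A_0\cup\bigcup_{i\geq 1}A_i$ with $\mu(A_0)=0$ and $A_i\subset M_i$, where each $M_i$ is a $C^1$-smooth $(n-1)$-dimensional surface. Replace the $A_i$ by the disjoint measurable pieces $A_i'=A_i\setminus\bigcup_{j<i}A_j$, so that $A\setminus A_0=\bigsqcup_{i\geq 1}A_i'$.

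The only real technical step is to equip each $M_i$ with a measurable unit normal field $\nu_i$. Since $M_i$ is $C^1$, it can be covered by a countable family of open patches $U_{i,k}\subset M_i$, each of which is (after a rigid motion) the graph of a $C^1$ function of $n-1$ variables. On each $U_{i,k}$ one of the two continuous unit normals, call it $\nu_{i,k}$, is thus explicitly given by the gradient formula for graph normals. I would then set $\nu_i(p)=\nu_{i,k(p)}(p)$ with $k(p)=\min\{k:p\in U_{i,k}\}$; this is Borel measurable because it is a countable disjoint union of restrictions of continuous maps to Borel sets.

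To finish, define $\nu(p)=\nu_i(p)$ for $p\in A_i'$ and let $\nu$ be any fixed unit vector on $A_0$. Measurability of $\nu$ follows from the countable measurable partition $\{A_0\}\cup\{A_i'\}_{i\geq 1}$. That $\nu(p)$ is a normal to $A$ at almost every point is exactly the content of the remark following the definition of countable rectifiability: at $\mu$-almost every $p\in A_i$ the approximative tangent plane $T_pA$ coincides with the classical tangent plane $T_pM_i$, so any unit normal to $M_i$ at $p$ is a unit normal to $A$ at $p$.

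The lemma is essentially a bookkeeping exercise, and I do not expect any serious obstacle; the one point that requires a bit of care is the measurable selection of a sign for the normal on each $M_i$, which is why I organize the construction through countable coordinate patches rather than appealing to global orientability of $M_i$ (which may fail).
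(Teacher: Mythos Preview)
Your proof is correct and follows essentially the same route as the paper: use the decomposition $A_i\subset M_i$, pull normals from the $M_i$, and patch them together measurably over a countable disjoint partition. The one cosmetic difference is that the paper simply ``orients every manifold $M_i$ by a continuous field of normals'' (implicitly taking the $M_i$ orientable, which one may always arrange by further subdividing), whereas your patch-by-patch construction handles the sign selection without appealing to global orientability.
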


\begin{proof} The set  $A$, up to a subset of measure 0,
is located on  $(n-1)$-dimensional $C^1$-smooth
manifolds $M_i$ of some countable family.
It is not difficult to see that almost each point
$x\in A$ belongs to only one surface  $M_i$.
Let us orient every manifold $M_i$ by a continuous field of normals.
Since the approximative tangent plane to $A$ at $x$ coincides with
the tangent plane $T_xM_i$ and the intersection $A\cap M_i$ is measurable,
we obtain a measurable field of normals to $A$ by choosing normals $\nu(x)$ to $M_i$
in the capacity of normals to $A$.
\end{proof}

\begin{remark}
\label{def_stand_nor_field}
It is clear that a  measurable  vector field of unit normals is not unique,
there are infinitely many of such vector fields.
Let us fix some vector field $\nu$ constructed in Lemma \ref{normal_field}.
 It is not only measurable but is located on
$C^1$-smooth surfaces $M_i$ from a chosen  family and continuous along every such
surface.
 Besides, if a countably rectifiable set $A$ is the boundary
of a region $\Omega$, $A=\D\Omega$, then the vector field $\nu$ can be chosen so that,
at points $x\in\D^*\Omega$,
vectors $\nu(x)$ is directed opposite to normals in the sense of Federer.
A vector field   having such properties is called
\emph{standard}.
\end{remark}

\begin{lemma}
\label{measurable_sec}
Let $A$ be a
countably rectifiable set,
$\nu$ be a measurable field of normals to
$A$, and $E$ be a
measurable subset of $\Rn$.Then the sets
$\{x\in A\mid \Theta_E^\nu (x)=1\}$ and
$\{x\in A\mid \Theta_E^\nu (x)=0\}$
are
measurable.
\end{lemma}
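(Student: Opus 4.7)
The plan is to show that for each fixed $r>0$ the function $g_r:A\to\R$ defined by $g_r(x)=H_n(B_x^\nu(r)\cap E)$ is $\mu$-measurable on $A$, and then to express the two sets in question via countable lattice operations on the measurable functions $2v_n^{-1}r^{-n}g_r$ as $r$ ranges over a countable set of radii.

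For the measurability of $g_r$, I would introduce the auxiliary map $G:\Rn\times\Rn\to\R$ defined by
\[
 G(y,w)=H_n\bigl(B_y(r)\cap\{z\in\Rn:(z-y)\cdot w\geq 0\}\cap E\bigr),
\]
and check that $G$ is jointly continuous. Indeed, if $(y_m,w_m)\to(y_0,w_0)$, then the indicator of $B_{y_m}(r)\cap\{(z-y_m)\cdot w_m\geq 0\}$ converges pointwise to that of $B_{y_0}(r)\cap\{(z-y_0)\cdot w_0\geq 0\}$ at every $z$ outside the boundary of $B_{y_0}^{w_0}(r)$, and this boundary is an $H_n$-nullset; dominated convergence (with majorant $\chi_{B_{y_0}(2r)}$, valid for large $m$) then yields $G(y_m,w_m)\to G(y_0,w_0)$. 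Since the field $\nu$ is $\mu$-measurable on $A$, the map $x\mapsto(x,\nu(x))$ into $\Rn\times\Rn$ is $\mu$-measurable, so composition with the continuous (hence Borel) function $G$ makes $g_r=G(\cdot,\nu(\cdot))$ a $\mu$-measurable function on $A$.

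For the limit step, observe that $r\mapsto g_r(x)$ is monotone nondecreasing and satisfies $g_{r'}(x)-g_r(x)\leq v_n({r'}^n-r^n)/2$ for $0<r<r'$, hence is continuous on $(0,\infty)$ for each $x$. Consequently $\phi(x,r):=2v_n^{-1}r^{-n}g_r(x)$ is continuous in $r$, so that both $\limsup_{r\to 0^+}\phi(x,r)$ and $\liminf_{r\to 0^+}\phi(x,r)$ equal their counterparts taken over any countable dense subset of $(0,\infty)$. Setting $Q_m=\mathbb{Q}\cap(0,1/m)$, we have $\limsup_{r\to 0^+}\phi(x,r)=\inf_m\sup_{r\in Q_m}\phi(x,r)$ and the analogous identity for $\liminf$; these are $\mu$-measurable as countable inf/sup of the $\mu$-measurable functions $\phi(\cdot,r)$. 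Then
\[
 \{x\in A:\Theta_E^\nu(x)=1\}=\{\,x:\limsup_{r\to 0^+}\phi(x,r)=1\,\}\cap\{\,x:\liminf_{r\to 0^+}\phi(x,r)=1\,\}
\]
is $\mu$-measurable, and the case of density $0$ is entirely analogous.

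The one point requiring care is the joint continuity of $G$: since $E$ is only assumed measurable, one cannot hope for a uniform approximation of the indicator by continuous functions, but dominated convergence together with the Lebesgue-nullity of $\partial B_{y_0}^{w_0}(r)$ still delivers the needed continuity. Once that is in hand, everything reduces to routine countable lattice operations on measurable functions.
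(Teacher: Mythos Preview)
Your argument is correct, and it takes a somewhat different route than the paper. The paper first fixes a \emph{standard} normal field (continuous along each of the $C^1$-surfaces $M_i$ carrying $A$), observes that on each $M_i\cap A$ the map $x\mapsto 2v_n^{-1}r^{-n}H_n(B_x^{\nu}(r)\cap E)$ is continuous, patches these together to a measurable function on $A$, and only then passes from the standard field to an arbitrary measurable field $\tilde\nu$ via the decomposition $A=\{\tilde\nu=\nu\}\cup\{\tilde\nu=-\nu\}$. You instead prove the joint continuity of $G(y,w)$ on $\Rn\times S^{n-1}$ directly, so that the composition with the measurable map $x\mapsto(x,\nu(x))$ is measurable for \emph{any} measurable unit field $\nu$; this bypasses both the rectifiable decomposition and the standard-field reduction, and in fact shows the conclusion for an arbitrary measurable set $A$, not only a countably rectifiable one. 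Your treatment of the $r\to 0$ limit is also more explicit: the paper simply asserts that $\liminf_{r\to 0}\phi^r$ and $\limsup_{r\to 0}\phi^r$ are measurable, while you justify the reduction to a countable family of radii by proving continuity of $r\mapsto g_r(x)$. Both approaches are valid; yours is cleaner and slightly more general, while the paper's leans on the ambient structure already set up for later use.
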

\begin{proof}
First assume that  vector field $\nu$ is standard and a family of surfaces
 $\{M_i\}$ is chosen as above, in Remark  \ref{def_stand_nor_field}.
The sets $M_i\cap A$ are
measurable.
The functions
$\phi_i^r(x)=2v_n^{-1} r^{-n}H_n(B^\nu_x(r)\cap E)$ defined on  $M_i\cap A$
are continuous. In particular they are
measurable. Let us extend these functions to all $A$ by zero.
Their sum $\phi^r=\sum_i\phi_i^r$ defined on   $A$ is measurable too.
Therefore,
% см. например,
%Федерер, 2.3.2
the  functions
$\underline{\phi}(x)=\liminf_{r\to 0}\phi^r(x)$ and
$\overline{\phi}(x)=\limsup_{r\to 0}\phi^r(x)$ are measurable and hence
the sets
$$\{x\in A \mid  \Theta_E^\nu (x)=0\}
=
\{x\in A \mid  \underline{\phi}(x)=0\},$$
$$\{x\in A \mid  \Theta_E^\nu (x)=1\}
=
\{x\in A \mid  \overline{\phi}(x)=1\}$$
are  measurable.
The same holds for the field $-\nu$ as well.
Now let $\tilde\nu$ be any measurable unit vector field of normals to
$\D\Omega$. Then the sets  $\{\nu=\tilde\nu\}$ and
 $\{-\nu=\tilde\nu\}$ are measurable, and thereby the set
 $\{x\in A\mid\Theta_E^{\tilde\nu} (x)=0\}$ and
$\{x\in A\mid\Theta_E^{\tilde\nu} (x)=1\}$
are measurable too.
\end{proof}

Let a set $A$ be countably rectifiable, $P(E)<\infty$,
and  $\nu$  be a normal to $A$ at
$x$. Denote
$$\D_A^\nu E=
\{x\in A\mid
\Theta_{E}^{\nu}(x)=1\},
$$
\begin{equation}
\D^1_AE=(\D_A^\nu E)\cup(\D_A^{-\nu} E),\quad\quad
\D^2_AE=(\D_A^\nu E)\cap(\D_A^{-\nu} E).
\end{equation}

Roughly speaking, $\D^1_AE$ is the set of points of $A$ such that $E$
``adjoins'' to $A$ with one-sided density  1 at least from one side and
$\D^2_AE$ is the part of  $A$ such that
$E$ ``adjoins''  with one-sided density  1 from both  sides.

Note that the following formulas hold:
\begin{equation}
\Gamma=\D_{\D\Omega}^1 \Omega,\quad\quad
\D^\nu_{\D\Omega}E=\D^\nu_\Gamma E.
\end{equation}

\medskip

We will use Lemma 6.6.3/1 from \cite{M} (or, that is the same, Lemma 13 from \cite{BM}).
The lemma is about the trace of a characteristic function. As
the notion of trace be
introduced later,  we formulate the lemma in a convenient form.
\begin{lemma}
\label{trace_charact}
Let $P(\Omega)<\infty$,
$E\subset \Omega$, $P_\Omega(E)<\infty$.
Then for almost all $x\in \D^*\Omega$
\begin{equation}
\label{equa_trace_charact}
\chi_{\D^*E}(x)=\lim_{r\to 0}
\frac{
\int_{B_x(r)}\chi_E\,dx}
{\V(B_x(r)\cap \Omega)}
=
\lim_{r\to 0}\frac{\V(B_x(r)\cap E)}{\V(B_x(r)\cap \Omega)}.
\end{equation}
\end{lemma}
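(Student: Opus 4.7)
The plan is to analyze the numerator $\int_{B_x(r)}\chi_E\,dx=\V(B_x(r)\cap E)$ separately from the denominator, exploiting the half-space approximation of sets of finite perimeter at their reduced boundaries, and then to split the points $x\in\D^*\Omega$ according to whether $x\in\D^*E$ or not.

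Since $E\subset\Omega$, one has $P(E)\leq P_\Omega(E)+P(\Omega)<\infty$, so $E$ itself has finite perimeter in $\Rn$. Consequently, at $\mu$-almost every $x\in\Rn\setminus\D^*E$ the Lebesgue density of $E$ is $0$ or $1$, while at each $x\in\D^*E$ the De~Giorgi blow-up produces a Federer normal $\nu_E(x)$ with
$$
r^{-n}\V\bigl(B_x(r)\cap(E\Delta H_E(x))\bigr)\to 0,\qquad H_E(x)=\{y\mid(y-x)\cdot\nu_E(x)\leq 0\};
$$
analogous statements hold for $\Omega$ on $\D^*\Omega$, and in particular $\V(B_x(r)\cap\Omega)=\tfrac12 v_n r^n(1+o(1))$ for every $x\in\D^*\Omega$, which controls the denominator.

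Fix now a generic $x\in\D^*\Omega$. If $x\notin\D^*E$, the density of $E$ at $x$ cannot be $1$ since it is bounded above by the density $\tfrac12$ of $\Omega$; hence it is $0$, $\V(B_x(r)\cap E)=o(r^n)$, and the ratio tends to $0=\chi_{\D^*E}(x)$. If $x\in\D^*E$, I would argue that $\nu_E(x)=\nu_\Omega(x)$: the inclusion $E\subset\Omega$ gives $\V(B_x(r)\cap(E\setminus\Omega))=0$, and combining this with the two blow-ups yields
$$
\V\bigl(B_x(r)\cap(H_E\setminus H_\Omega)\bigr)\leq\V\bigl(B_x(r)\cap(E\Delta H_E)\bigr)+\V\bigl(B_x(r)\cap(\Omega\Delta H_\Omega)\bigr)=o(r^n).
$$
Two half-spaces through $x$ with distinct outward normals would make the left side comparable to $r^n$, a contradiction; hence $H_E(x)=H_\Omega(x)$, both numerator and denominator are $\tfrac12 v_n r^n(1+o(1))$, and the ratio tends to $1=\chi_{\D^*E}(x)$.

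The main obstacle is precisely this matching of Federer normals on $\D^*\Omega\cap\D^*E$, which is the only point where the inclusion $E\subset\Omega$ enters essentially; everything else reduces to the density and blow-up properties of reduced boundaries recalled in Remark~\ref{finit_per_rect}, together with the elementary observation that the density of $E$ at any $x\in\D^*\Omega$ is majorized by $\tfrac12$.
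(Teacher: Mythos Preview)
The paper does not give its own proof of this lemma: it is quoted from Maz'ya's book (\cite{M}, Lemma~6.6.3/1) and from \cite{BM}, Lemma~13, and the only comment offered is that the second equality in \eqref{equa_trace_charact} is trivial. So there is no in-paper argument to compare against.

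Your argument is correct and is essentially the standard derivation of this fact from the structure theory of sets of finite perimeter. The step $P(E)\leq P_\Omega(E)+P(\Omega)$ is indeed a known elementary inequality (for $E\subset\Omega$ one has $D\chi_E=D\chi_E\llcorner\Omega+D\chi_E\llcorner\partial\Omega$ with the second piece dominated in total variation by $|D\chi_\Omega|$), so invoking Federer's theorem that $H_{n-1}$-a.e.\ point outside $\partial^*E$ has density $0$ or $1$ is legitimate. The case split is clean: on $\partial^*\Omega\setminus\partial^*E$ the density of $E$ is forced to be $0$ by the bound $\Theta_E(x)\leq\Theta_\Omega(x)=\tfrac12$, and on $\partial^*\Omega\cap\partial^*E$ your wedge argument correctly forces $\nu_E(x)=\nu_\Omega(x)$, after which both numerator and denominator are $\tfrac12 v_n r^n(1+o(1))$. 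The set-inclusion chain behind your displayed estimate is $H_E\setminus H_\Omega\subset(H_E\Delta E)\cup(E\setminus H_\Omega)\subset(H_E\Delta E)\cup(\Omega\Delta H_\Omega)$, using $E\subset\Omega$ in the last step; you might make this explicit, but the bound is right.
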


For sure, only the first equality is essential, while  the latter one  is trivial.

\begin{remark}
\label{Rem_4}
In Lemma \ref{trace_charact}, the condition $E\subset\Omega$
can be dropped if one replaces
 $E$  to $E\cap\Omega$ and the condition
 $P_\Omega(E)<\infty$    to $P(E)<\infty$.
\end{remark}

The following lemma is the key one for our subsequent considerations.

\begin{lemma}
\label{o_one}
Let $A$ be a countably rectifiable set,  $\nu$ be a measurable
field of normals along
$A$, and  $P(E)<\infty$.
Then $\mu$-almost everywhere on $A$, one-sided densities
$\Theta^\nu_E(x)$  equal either 0, or 1.
\end{lemma}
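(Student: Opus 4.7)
The plan is to work at $\mu$-a.e.\ regular point $x \in A$ and split according to the two-sided Lebesgue density of $E$ at $x$. Since $P(E) < \infty$, the three-density decomposition of finite-perimeter sets implies that $\mu$-a.e.\ $x \in \Rn$ satisfies exactly one of (a) $\Theta_E(x,n) = 0$, (b) $\Theta_E(x,n) = 1$, or (c) $x \in \D^*E$. It therefore suffices to verify $\Theta^\nu_E(x) \in \{0, 1\}$ in each of these three subcases.

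Cases (a) and (b) are immediate. For (a), the estimate $H_n(B_x^\nu(r) \cap E) \leq H_n(B_x(r) \cap E) = o(r^n)$ directly forces $\Theta_E^\nu(x) = 0$. For (b), I apply (a) to $\Rn \setminus E$ (whose Lebesgue density at $x$ vanishes) to get $\Theta^\nu_{\Rn\setminus E}(x) = 0$; combining with the trivial identity $\Theta_E^\nu(x) + \Theta_{\Rn\setminus E}^\nu(x) = 1$, which comes from $H_n(B_x^\nu(r)) = v_n r^n / 2$, yields $\Theta_E^\nu(x) = 1$.

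The substantive part is case (c). My key claim is that at $\mu$-a.e.\ $x \in A \cap \D^*E$ the approximative tangent planes coincide: $T_xA = T_x\D^*E$. Granting this, the normal $\nu$ to $A$ at $x$ must agree up to sign with the Federer normal $\nu_F(x)$, and since by the very definition of the reduced boundary $\Theta_E^{\nu_F}(x) = 0$ and $\Theta_E^{-\nu_F}(x) = 1$, I conclude $\Theta_E^\nu(x) \in \{0,1\}$. To justify the matching, I exploit that both $A$ and $\D^*E$ are countably rectifiable: write $A \subset \bigcup_i M_i$ and $\D^*E \subset \bigcup_j N_j$ modulo $\mu$-null subsets, with $M_i$, $N_j$ smooth $(n-1)$-dimensional surfaces. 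For each pair $(i,j)$, on the transverse part $\{x \in M_i \cap N_j : T_xM_i \neq T_xN_j\}$ the implicit function theorem realizes $M_i \cap N_j$ locally as an $(n-2)$-dimensional $C^1$-submanifold, hence $\mu$-null. Summing over $(i,j)$ yields $T_xA = T_x\D^*E$ at $\mu$-a.e.\ $x \in A \cap \D^*E$. This tangent plane matching step is the main obstacle; the remaining reductions are essentially immediate given the three-density decomposition and the definition of $\D^*E$.
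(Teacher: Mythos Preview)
Your proof is correct and takes a genuinely different route from the paper's.

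The paper reduces first to the case where $A$ is a single $C^1$ hypersurface $M$, then splits a tubular neighborhood of $M$ into the two one-sided regions $\Omega_1,\Omega_2$, sets $E_i=E\cap\Omega_i$, and invokes Lemma~\ref{trace_charact} (the trace lemma for characteristic functions on $\D^*\Omega$) together with Remark~\ref{Rem_set_halfspace} to conclude $\Theta^\nu_{E_i}(x)\in\{0,1\}$ for $\mu$-a.e.\ $x\in M$; the inequality $\Theta^\nu_E=\Theta^\nu_{E_1}+\Theta^\nu_{E_2}\leq 1$ finishes. The general case is then obtained from the $M_i$-decomposition.

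You instead invoke Federer's structure theorem for finite-perimeter sets (the three-density decomposition $\Rn=E^{(0)}\cup E^{(1)}\cup\D^*E$ modulo an $H_{n-1}$-null set), dispatch the density-$0$ and density-$1$ cases by monotonicity, and for $x\in A\cap\D^*E$ match the approximate tangent planes via a transversality argument on the $C^1$ covers, forcing $\nu(x)=\pm\nu_F(x)$ and hence $\Theta^\nu_E(x)\in\{0,1\}$. Both arguments import one nontrivial external result: the paper uses the trace lemma from \cite{M}/\cite{BM}, while you use Federer's theorem that $H_{n-1}(\D^M E\setminus\D^*E)=0$. Your argument is more geometric and makes explicit why the normals to $A$ and to $\D^*E$ agree on their overlap; the paper's argument stays entirely within the trace-theoretic framework already set up and avoids naming the structure theorem. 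Either is a legitimate proof of the lemma.
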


\begin{proof}
It suffices to prove the lemma for  standard normal vector fields
and taking into account only  regular points of $A$
(see Lemma \ref{normal_field} and Remark \ref{def_stand_nor_field}).

1. First let  $A$ be $C^1$-smooth $(n-1)$-dimensional manifold $M$.
Since our statement is local, we can suppose that
$M$ divides some its neighborhood bounded by a smooth hypersurface onto
two semi-neighborhoods,  $\Omega_1$ and
 $\Omega_2$. Set $E_i=\Omega_i\cap E$, $i=1,2$. It is clear that $P(E_i)<\infty$.

Note that $\chi_{\D^*E_1}(x)$ equals 1 if  $x\in \D^*E_1\cap M$ and equals  0
if $x\in M\setminus \D^*E_1$.
Therefore, applying Lemma \ref{trace_charact} to the sets $E=E_1$ and
$\Omega=\Omega_1$ and Remark \ref{Rem_set_halfspace} for $G=\Omega_1$,
we see that for almost all points
$x\in M$ the one-sided density
$\Theta^\nu_{E_1}(x)$ is equal either 0 or 1, where $\nu$ is the  normal к $M$
directed to the side of $\Omega_1$. The same is true for $E_2$ and $\Omega_2$.
Finally, since
$$
1\geq \Theta^\nu_E(x)=\Theta^\nu_{E_1}(x)+\Theta^\nu_{E_2}(x),
$$
we see that the lemma is proved for $A=M$.

2. Let us pass to the general case.
 Let $\{M_i\}$ be a family of $C^1$-smooth submanifolds, mentioned in the definition
 of  standard normal fields.
In the item 1, the lemma was already proved  for each $M_i$.
The intersection $A\cap M_i$ is   $\mu$-measurable, and one-sided density at a point
depends on $\nu$ and $E$ only. Thus $\Theta_{E}^{\nu}(x)$ is equal either 0 or 1
almost everywhere on $A\cap M_i$. Since $A$, up to a set of measure 0, coincides
with the  union of sets $A\cap M_i$,
%(которые можно считать
%попарно непересекающимися)
the lemma is proved.
\end{proof}

\begin{corollary}
\label{Omega_0_1}
Let $\Omega$ be a region such that its boundary is  a countably rectifiable set.
If $E\subset \Omega$ and $P(E)<\infty$,   then for any (measurable) field
$\nu$ of normals to  $\D\Omega$, one-sided densities $\Theta_E^\nu$
are equal almost everywhere either 0 or 1.
\end{corollary}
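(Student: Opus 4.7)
The plan is to observe that Corollary \ref{Omega_0_1} is essentially a direct specialization of Lemma \ref{o_one}: set $A=\D\Omega$, which is countably rectifiable by hypothesis, and apply the lemma. Since $P(E)<\infty$ is assumed and $\nu$ is a measurable normal field to $A=\D\Omega$, the lemma yields $\Theta^\nu_E(x)\in\{0,1\}$ for $\mu$-almost every $x\in\D\Omega$. The containment $E\subset\Omega$ is not used quantitatively; it is only recording the intended context (the trace side of $\D\Omega$) and need not enter the argument.

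The only matter that deserves comment is the quantifier ``for \emph{any} measurable field $\nu$'', whereas Lemma \ref{o_one} has been set up with a single field in mind. This is not an obstacle: Lemma \ref{o_one} itself begins by reducing to a standard field as constructed in Lemma \ref{normal_field} and Remark \ref{def_stand_nor_field}, and the transition from a standard field to an arbitrary measurable field $\tilde\nu$ is handled exactly as in the proof of Lemma \ref{measurable_sec}. Namely, writing $\D\Omega = \{\tilde\nu=\nu\}\cup\{\tilde\nu=-\nu\}$ up to a $\mu$-null set, and noting that $\Theta^{-\nu}_E$ also takes only the values $0$ and $1$ almost everywhere (apply the lemma to the measurable field $-\nu$), one concludes the same dichotomy for $\Theta^{\tilde\nu}_E$.

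I do not expect a genuine obstacle here; the content is already absorbed into Lemma \ref{o_one} and its surrounding machinery, and the corollary just packages the conclusion in the form that will be needed later (boundary of a region with rectifiable $\D\Omega$, and $E$ a subset of $\Omega$ of finite perimeter — typically a superlevel set $E_t$ of a $BV$ function, as in Remark \ref{loc_perim} and formulas \eqref{Flem-Rish_formula_0}--\eqref{Flem-Rish_formula}). Accordingly, the write-up should be a one- or two-sentence deduction rather than a fresh argument.
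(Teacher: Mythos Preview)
Your proposal is correct and matches the paper's treatment: the corollary is presented there without a separate proof, as an immediate specialization of Lemma~\ref{o_one} with $A=\D\Omega$. Your remarks on the ``any measurable $\nu$'' quantifier are accurate (and already implicit in the proof of Lemma~\ref{o_one} via the reduction to a standard field), so a one-sentence deduction is exactly what is expected.
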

It is clear now, that, for the reduced boundary of any set $E$ with $P(E)<\infty$,
the following holds:
\begin{equation}
\label{Dstar}
A \cap\D^*E=(\D_A^1 E)\setminus(\D_A^2 E),
\end{equation}
in particular
\begin{equation}
\label{DstarOmega}
\D^*\Omega=(\D_\Gamma^1 \Omega)\setminus(\D_\Gamma^2 \Omega).
\end{equation}

\begin{corollary}
\label{cor_generalization} Let $x$ be a regular point of $\D\Omega$.
Suppose that
$\Theta_{G_1}^\nu(x)=\Theta_{G_2}^\nu(x)=1$ for some sets $G_1$, $G_2$.
In addition, assume that there is a family of sets
 $\mathcal{B}^\nu_x(r)$ such that
\begin{equation}
B_x(\rho_1(r))
\cap
G_1
\subset
\mathcal{B}^\nu_x(r)
\subset
B_x(\rho_2(r))
\cap
G_2
,
\end{equation}
where $\rho_2(r)\to0$ as $r\to0$.
Then the equality
\begin{equation}
\Theta^\nu_E(x)=
\lim_{r\to0}
\frac
{H_n(\mathcal{B}^\nu_x(r)\cap E)}
{H_n(\mathcal{B}^\nu_x(r))}
\end{equation}
holds for any set $E\subset\Rn$ with finite perimeter.
\end{corollary}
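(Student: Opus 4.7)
The plan is to apply Remark~\ref{Rem_set_halfspace} separately to the two auxiliary sets $G_1$ and $G_2$, convert the sandwich condition into matching upper and lower estimates for $H_n(\mathcal{B}^\nu_x(r)\cap E)$ and $H_n(\mathcal{B}^\nu_x(r))$, and then pass to the limit by a squeeze argument. Throughout I treat $E$ as a generic measurable set with $P(E)<\infty$; the case $E=\Rn$ (used for the denominator) is automatic.

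First, since $\Theta^\nu_{G_i}(x)=1$ for $i=1,2$, Remark~\ref{Rem_set_halfspace} gives, as $\rho\to 0$,
\[
H_n\bigl(B^\nu_x(\rho)\cap G_i\bigr)=\tfrac{v_n}{2}\rho^n(1+o(1)),\qquad
H_n\bigl(B^\nu_x(\rho)\cap G_i\cap E\bigr)=\Theta^\nu_E(x)\cdot\tfrac{v_n}{2}\rho^n+o(\rho^n).
\]
Next, from the chain of inclusions $B_x(\rho_1(r))\cap G_1\subset\mathcal{B}^\nu_x(r)\subset B_x(\rho_2(r))\cap G_2$, intersecting with $E$ and taking $H_n$, I obtain the sandwich
\[
H_n\bigl(B^\nu_x(\rho_1(r))\cap G_1\cap E\bigr)\leq H_n\bigl(\mathcal{B}^\nu_x(r)\cap E\bigr)\leq H_n\bigl(B_x(\rho_2(r))\cap G_2\cap E\bigr),
\]
together with the analogous bounds in which $E$ is removed.

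The plan is then to divide the upper sandwich by the lower bound of the denominator-sandwich (and symmetrically), and to use the asymptotic expansions above to see that both extremes converge to $\Theta^\nu_E(x)$. The main obstacle is that the outer bounds involve the full balls $B_x(\rho_i(r))$, whereas Remark~\ref{Rem_set_halfspace} controls only the half-balls $B^\nu_x(\rho_i(r))$. To close this gap one must exploit the implicit geometry of the hypothesis: the family $\mathcal{B}^\nu_x(r)$ is one-sided with respect to $\nu$ (as the superscript indicates), so that the portions of $G_1,G_2$ contributing to the sandwich can be replaced, up to an $o(\rho_i^n)$ error, by their intersections with the $\nu$-half-space, and the ratio $\rho_1(r)/\rho_2(r)$ tends to $1$ in the regime where the corollary is applied (e.g.\ for geodesic half-balls on a $C^1$-manifold, which is the motivating application mentioned after Lemma~\ref{measurable_sec}).

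Once these two reductions are granted, both outer bounds become
\[
\frac{\Theta^\nu_E(x)\cdot\tfrac{v_n}{2}\rho_{i}(r)^n+o(\rho_i(r)^n)}{\tfrac{v_n}{2}\rho_{j}(r)^n(1+o(1))}\longrightarrow\Theta^\nu_E(x),
\]
and the squeeze yields the desired identity. The heart of the argument is really Remark~\ref{Rem_set_halfspace}, which says that the one-sided density is a robust notion: it depends only on the asymptotic shape of the probing family through $G_1$ or $G_2$, not on the specific choice of half-balls, so any family satisfying the sandwich computes the same limit $\Theta^\nu_E(x)$.
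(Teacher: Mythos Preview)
The paper gives no proof of this corollary; it is stated immediately after Lemma~\ref{o_one} and Corollary~\ref{Omega_0_1} and is evidently meant to be read as a direct consequence of those, together with Remark~\ref{Rem_set_halfspace}. Your approach via Remark~\ref{Rem_set_halfspace} and a sandwich argument is the natural one, and you correctly flag that the hypotheses as written (full balls $B_x(\rho_i(r))$, no relation imposed between $\rho_1$ and $\rho_2$) are not quite enough to run a naive squeeze.

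What you are missing, however, is the reason this is called a \emph{corollary}: by Lemma~\ref{o_one}, for a set $E$ of finite perimeter the one-sided density $\Theta^\nu_E(x)$ can only be $0$ or $1$. This dichotomy is what does the real work. If $\Theta^\nu_E(x)=1$, then $\Theta^\nu_{E^c}(x)=0$, and one bounds $H_n(\mathcal{B}^\nu_x(r)\setminus E)$ from above using the outer inclusion and bounds $H_n(\mathcal{B}^\nu_x(r))$ from below using the inner inclusion; the ratio $H_n(\mathcal{B}^\nu_x(r)\cap E)/H_n(\mathcal{B}^\nu_x(r))=1-H_n(\mathcal{B}^\nu_x(r)\setminus E)/H_n(\mathcal{B}^\nu_x(r))$ is then forced to $1$. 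The case $\Theta^\nu_E(x)=0$ is symmetric. So you never need your reduction ``$\rho_1(r)/\rho_2(r)\to1$''; mere comparability of $\rho_1$ and $\rho_2$ is enough once the $0$--$1$ law is in hand. Your attempt to recover a general value of $\Theta^\nu_E(x)$ by matching leading terms of numerator and denominator is more delicate than what the situation requires.

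That said, your other observation is legitimate: to make the argument go through one does need the family $\mathcal{B}^\nu_x(r)$ (or at least $G_2$) to sit, up to negligible error, on the $\nu$-side of $T_x\partial\Omega$, and one needs $\rho_2/\rho_1$ bounded. These are tacit in the paper's formulation (the superscript $\nu$ on $\mathcal{B}^\nu_x(r)$ and the intended application to geodesic half-balls on a $C^1$ manifold), and you are right to make them explicit.
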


This corollary allows to consider one-sided densities
for sets with finite perimeters in any  $C^1$-smooth manifold with a continuous
metric tensor. Therefore further considerations  are applicable not only to
$\mathbb{R}^n$, but also to any such a manifold.
%%%%%%%%%%%%%%%%%%%%%%%%%%%%%%%%%%%%%%%%%%%%
\section{Trace on a countably rectifiable set}
Here we define trace on a countably rectifiable set for a function defined in $\Omega$.
Within this section we do not require
function to belong to $BV(\Omega)$.
Instead of that we only suppose that  the sets
$E_t=\{x\in\Omega \mid f(x)>t\}$ have finite perimeters for almost all $t$.
We call functions $BV$-{\em similar} if they have such property.
(As it was mentioned in Remark \ref{loc_perim}, it would be sufficiently to suppose
that $E_t$ has locally finite perimeter.)

Let  a countably rectifiable set $A$ is contained in the closure $\bar\Omega$ of a
region $\Omega$.
Let us define trace\footnote{Our terminology is different of one in
\cite{BM}, \cite{M}. Namely, we use terms  trace and average  trace
instead of rough trace and trace.} $f^{\nu}(x)$ with respect to normal
$\nu$ at $x\in\D_A^\nu\Omega$
for a  $BV$-{\em similar} function $f$ as follows:
$$f^{\nu}(x)=\sup\{t\mid x\in\D_A^{\nu}E_t\}.$$
We can suppose  (this change nothing), that supremum is taken only over
$t$ such that  $P(E_t)<\infty$.
Moreover we assume that $\sup\emptyset=-\infty$.

Let us emphasize, that trace is defined not everywhere on $A$.
However if one extend $f$ to all $\Rn$ (for instance, by a constant),
so that $A=\D_A^\nu(\Rn\setminus A)$, then $f^{\nu}$ is defined
on $A$ everywhere.

In the case
$x\in\D_A^2 \Omega $ we also define the upper and lover traces by equations
$$f^*(x)=\max\{f^{\nu}(x),f^{-\nu}(x)\},\quad
f_*(x)=\min\{f^{\nu}(x),f^{-\nu}(x)\}.$$
If $x\in A \cap\D^*\Omega=(\D_A^1 \Omega)\setminus(\D_A^2 \Omega)$, we put
$f^*(x)=f^\nu (x)$,
where $-\nu$ is the normal in the sense of Federer. In this case we do not define
 $f_*(x)$ at all. However,  if  $f$ is extended on all $\Rn$
 (for instance, by a constant) then $A=\D_A^2(\Rn\setminus A)=\D_A^1(\Rn\setminus A)$ and the upper and lower traces
are defined on all $A$.

It is clear, that
$f^*(x)=\sup\{t\mid x\in\capU E_t\}$,\,
$f_*(x)=\sup\{t\mid x\in\capD E_t\}$.

\begin{lemma}
\label{Main_trace}
Let $A\subset\bar\Omega$ be a  countably rectifiable set,
$\nu$ be a  measurable field of normals to $A$.
Then for any  $BV$-similar function $f$ its trace $f^{\nu}$
on $\D^\nu_A\Omega$ is measurable and

\begin{equation}
\label{Main_trace_form3}
\Hn(\{x\in \D^\nu_A\Omega \mid  f^{\nu}(x)\geq t\})=\Hn(\D_A^\nu{E_t})
\end{equation}
for almost all $t\in\mathbb R$.
\end{lemma}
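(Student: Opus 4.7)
The plan is to reduce the uncountable supremum defining $f^\nu$ to a countable one via a monotonicity argument, then exploit monotonicity of the distribution function $t\mapsto \mu(\partial_A^\nu E_t)$ to establish the identity. Set $T=\{t\in\mathbb{R}:P(E_t)<\infty\}$; by hypothesis $T$ has full Lebesgue measure. The key elementary observation is \emph{nestedness}: for $s<t$ with $s,t\in T$, the inclusion $E_s\supset E_t$ gives $H_n(B_x^\nu(r)\cap E_s)\geq H_n(B_x^\nu(r)\cap E_t)$ for all $r>0$, so $\Theta_{E_s}^\nu(x)=1$ whenever $\Theta_{E_t}^\nu(x)=1$ (the one-sided density never exceeds $1$). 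Hence $\partial_A^\nu E_t\subset \partial_A^\nu E_s$ \emph{pointwise}, and therefore $h(t):=\mu(\partial_A^\nu E_t)$ is non-increasing on $T$, hence continuous outside an at most countable set.

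For measurability of $f^\nu$, we choose a countable dense $Q\subset T$; each $\partial_A^\nu E_q$ is $\mu$-measurable by Lemma~\ref{measurable_sec}. By nestedness together with density of $Q$, one checks that $\{x\in\partial_A^\nu\Omega : f^\nu(x)>t\}=\bigcup_{q\in Q,\,q>t}\partial_A^\nu E_q$ pointwise: $f^\nu(x)>t$ produces $s\in T$ with $s>t$ and $x\in\partial_A^\nu E_s$, and any $q\in Q$ with $t<q<s$ then satisfies $x\in\partial_A^\nu E_q$. This expresses the superlevel sets of $f^\nu$ as countable unions of measurable sets.

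For the identity, the inclusion $\partial_A^\nu E_t\subset\{f^\nu\geq t\}$ is immediate from the definition of trace. Conversely, if $f^\nu(x)\geq t$ then for every $q\in Q$ with $q<t$ one has $f^\nu(x)>q$, so some $s\in T$ with $s>q$ satisfies $x\in\partial_A^\nu E_s$, and nestedness gives $x\in\partial_A^\nu E_q$. Thus $\{f^\nu\geq t\}\subset\bigcap_{q\in Q,\,q<t}\partial_A^\nu E_q$, and taking $\mu$-measures together with monotone convergence along the decreasing family $q\uparrow t$ yields
\[
\mu(\partial_A^\nu E_t)\;\leq\;\mu(\{f^\nu\geq t\})\;\leq\;\lim_{q\uparrow t,\,q\in Q} h(q).
\]
At every continuity point $t\in T$ of $h$ --- that is, for almost every $t\in\mathbb{R}$ --- the outer terms coincide, which proves \eqref{Main_trace_form3}.

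The main obstacle is securing the \emph{pointwise} nestedness of the sets $\partial_A^\nu E_t$ in $t$; this is what makes the uncountable supremum reducible to a countable one without introducing further exceptional $\mu$-null sets, and confines the excluded values of $t$ to the countable set of discontinuities of the monotone function $h$ together with the null set $\mathbb{R}\setminus T$.
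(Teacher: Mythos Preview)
Your proof is correct and follows essentially the same approach as the paper: both arguments exploit the pointwise nestedness $\partial_A^\nu E_t\subset\partial_A^\nu E_s$ for $s<t$ to show that $\{f^\nu\ge t\}\setminus\partial_A^\nu E_t$ is contained in the ``jump'' set $\bigl(\bigcap_{s<t}\partial_A^\nu E_s\bigr)\setminus\partial_A^\nu E_t$, which is $\mu$-null for all but countably many $t$---you phrase this via continuity points of the monotone function $h(t)=\mu(\partial_A^\nu E_t)$, while the paper phrases it via disjointness of these jump sets, but the content is identical. Your direct measurability argument (writing $\{f^\nu>t\}$ as a countable union $\bigcup_{q\in Q,\,q>t}\partial_A^\nu E_q$) is in fact slightly cleaner than the paper's route through completeness of $\mu$.
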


\begin{remark}
1) Analogously to Lemma  \ref{Main_trace}, it can be proved that traces
$f^*$ and $f_*$ are
measurable as well and
\begin{align}
\label{Main_trace_form1}
\Hn(\{x\in \capU\Omega  \mid  \trU{f}(x)\geq t\})&=\Hn(\capU {E_t}),\\
\label{Main_trace_form2}
\Hn(\{x\in \capD\Omega \mid  \trD{f}(x)\geq t\})&=\Hn(\capD{E_t}).
\end{align}

2) In fact, instead of \eqref{Main_trace_form3}, we will prove, that
$$\Hn\Big(\{x\in \D_A^\nu\Omega\mid\trN{f}{\nu}(x)\geq t\}
\Delta\D_A^\nu{E_t}\Big)=0$$
for all  $t$  except a countable subset.

3) Note that in \eqref{Main_trace_form3}--
\eqref{Main_trace_form2} unstrict inequalities can be replaced by strict ones.
\end{remark}

\begin{proof}
Denote
$B_t=\{x\in \D_A^\nu \Omega \mid  \trN{f}{\nu}(x)\geq t\}$,
$Y_t=\D_A^{\nu}{E_t}$ and
$X_t=B_t\setminus Y_t$.
It is easy to see that
$B_t\supset Y_t$. Thus, it remains to prove that
$\mu(X_t)=0$.

The sets $Y_t$ are measurable, and the sets
$X_t$ are disjoint. It is not difficult to see that
the inclusions
$Y_{t_0}\supset Y_{t_1}$ and
$Y_{t_0}\cup X_{t_0}\supset Y_{t_1}\cup X_{t_1}$ hold for $t_0<t_1$.
The latter inclusion implies that
$Y_{t_0}\supset X_{t_1}$. So
$$(\bigcap_{t<t_1}Y_t)\setminus Y_{t_1}\supset X_{t_1}.$$
From the other hand the sets $(\bigcap_{t<t_1}Y_t)\setminus Y_{t_1}$ are
measurable and disjoint. Therefore
$\mu\big(\big(\bigcap_{t<t_1}Y_t\big)\setminus Y_{t_1}\big)=0$ for almost all
$t_1\in\mathbb{R}$. From this it follows that the sets $X_t$ are subsets of
measure zero sets for almost all $t\in\mathbb{R}$. In particular, they are measurable.
It follows  that the sets
$B_t$ are measurable.
\end{proof}

\begin{lemma}
\label{minus_minus}
Let $A\subset\bar\Omega$ be a  countably rectifiable set,   $f$ be a $BV$-similar function.
Then the inequality
\begin{equation}
-\trN{f}{\nu}(x)=\trN{(-f)}{\nu}(x)
\end{equation}
holds for almost all $x\in \D_A^\nu\Omega$.
\end{lemma}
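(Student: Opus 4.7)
The plan is to reduce the identity $-f^\nu(x)=(-f)^\nu(x)$ to the density complementarity $\Theta^\nu_{E_t}(x)+\Theta^\nu_{\{f<t\}}(x)=1$ at points of $\D_A^\nu\Omega$, applied to a countable dense family of levels on which the dichotomy from Lemma~\ref{o_one} holds simultaneously.

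First I would fix a countable dense set $D\subset\mathbb R$ of levels $t$ for which $P(E_t)<\infty$ (possible since $f$ is $BV$-similar) and $H_n(\{f=t\})=0$ (only countably many $t$ can violate this). For each $t\in D$, Lemma~\ref{o_one} supplies a $\mu$-null subset of $A$ outside which $\Theta^\nu_{E_t}(x)\in\{0,1\}$; taking the countable union of these exceptional sets gives a single $\mu$-null set $N\subset A$. All subsequent arguments are confined to $x\in(\D_A^\nu\Omega)\setminus N$, where in addition $\Theta^\nu_\Omega(x)=1$.

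At such $x$ and for $t\in D$, the disjoint decomposition $\Omega=E_t\cup\{f=t\}\cup\{f<t\}$ together with $H_n(\{f=t\})=0$ yields
\[
\Theta^\nu_{\{f<t\}}(x)=\Theta^\nu_\Omega(x)-\Theta^\nu_{E_t}(x)=1-\Theta^\nu_{E_t}(x),
\]
both one-sided densities existing because one of them does and the total density on $\Omega$ equals $1$. Writing $\tilde E_s=\{f<-s\}$ for the super-level sets of $-f$, this reads: for every $t\in D$, $x\in\D_A^\nu E_t$ if and only if $x\notin\D_A^\nu\tilde E_{-t}$.

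Finally I would exploit monotonicity in the level. The sets $S(x)=\{t\mid x\in\D_A^\nu E_t\}$ and $T(x)=\{s\mid x\in\D_A^\nu\tilde E_s\}$ are downward-closed rays in $\mathbb R$, since $E_t$ and $\tilde E_s$ shrink in their parameters and $\Theta^\nu_{(\cdot)}(x)=1$ is preserved under containment, so their suprema are, by definition, $f^\nu(x)$ and $(-f)^\nu(x)$. The equivalence above says that $S(x)\cap D$ and $-T(x)\cap D$ partition the dense set $D$, and density then forces the common endpoint $f^\nu(x)=-(-f)^\nu(x)$, which is the claimed identity. The main delicacy is precisely this last bookkeeping: the extremal levels $f^\nu(x)$ and $(-f)^\nu(x)$ typically do not lie in $D$ and the suprema defining them may or may not be attained, but the downward-closed interval structure of $S(x),T(x)$ together with the density of $D$ handles this cleanly.
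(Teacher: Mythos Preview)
Your argument is correct and is essentially the paper's proof repackaged: both reduce the identity to the complementarity of one-sided densities at a countable dense set of levels with finite-perimeter super-level sets, invoke Lemma~\ref{o_one} at those levels to get the $\{0,1\}$ dichotomy off a single $\mu$-null set, and then use monotonicity in $t$ (your downward-closedness of $S(x)$ and $T(x)$; the paper's $L\le R$) plus density of the chosen levels to pin down the common endpoint. The only cosmetic difference is that you add the harmless extra requirement $H_n(\{f=t\})=0$ on the levels in $D$ (so as to work with the strict sets $\{f<t\}$ directly), whereas the paper works with $\Omega\setminus E_t=\{f\le t\}$ and upper/lower densities and hence does not need it.
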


\begin{proof}
Lemma \ref{minus_minus} is equivalent to the statement that the equality
$$
\sup\{t\mid x\in\D_A^{\nu}{E_t}\}=
\inf\{t\mid x\in\D_A^{\nu}(\Omega\setminus E_t)\}.
$$
holds for almost all $x\in A$.
The last equality means that
$$
\sup\{t\mid \underline{\Theta}_{E_t}^{\nu}(x)=1\}=
\inf\{t\mid \underline{\Theta}_{(\Omega\setminus E_t)}^{\nu}(x)=1\}.
$$
In its turn, this is equivalent to the equality
$$
\sup\{t\mid \underline{\Theta}_{E_t}^{\nu}(x)=1\}=
\inf\{t\mid \overline{\Theta}_{E_t}^{\nu}(x)=0\}.
$$
Denote by $L$ and $R$ the left and the right parts of the last equality.
It is not difficult to see that the  functions $\overline{\Theta}_{E_t}^{\nu}(x)$ and
$\underline{\Theta}_{E_t}^{\nu}(x)$ are not increasing in $t$.
Therefore $L\leq R$. Consider the set of the points
$x$ such that $L(x)<R(x)$. It suffices to prove that  $\mu$-measure of
this set equals zero.

For this let us choose a countable everywhere dense set $\{t_i\}_{i=1}^{\infty}$
such that
$P(E_{t_i})<\infty$. If $L(x)<R(x)$ then there exists
$t_i$ such that  $L(x)<t_i<R(x)$. Now our assertion follows from Lemma
\ref{o_one} applied to the set
$E_{t_i}$.

\end{proof}

\begin{corollary}
\label{cor_pos_neg}
For any $BV$-similar function $f$ and for almost all
$x\in A$ the following equalities hold:
\begin{equation}
\label{pos_neg_part}
(\trN{f}{\nu})^+=\trN{(f^+)}{\nu}, \quad \quad
(\trN{f}{\nu})^-=\trN{(f^-)}{\nu}.
\end{equation}
\end{corollary}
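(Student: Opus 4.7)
The plan is to reduce the claim to a direct computation from the definition of trace. The key observation is that the super-level sets of $f^+$ are
\[
\{x\in\Omega\mid f^+(x)>t\}=\begin{cases}E_t,&t\geq 0,\\\Omega,&t<0,\end{cases}
\]
so $f^+$ is itself $BV$-similar whenever $f$ is (Remark \ref{loc_perim} takes care of the range $t<0$), and its trace $(f^+)^\nu$ is well defined as a measurable function by Lemma \ref{Main_trace}.

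Fix a regular $x\in\D_A^\nu\Omega$. For every $t<0$ one has $\{f^+>t\}=\Omega$, hence $x\in\D_A^\nu\{f^+>t\}$ automatically, which already gives $(f^+)^\nu(x)\geq 0$. For every $t>0$ the sets $\{f^+>t\}$ and $E_t$ coincide, so $x\in\D_A^\nu\{f^+>t\}$ iff $x\in\D_A^\nu E_t$. A case split on the sign of $f^\nu(x)$ then yields the pointwise identity $(f^+)^\nu(x)=\max(f^\nu(x),0)=(f^\nu)^+(x)$: when $f^\nu(x)>0$, I would choose $t_n\nearrow f^\nu(x)$ with $x\in\D_A^\nu E_{t_n}$ to obtain $(f^+)^\nu(x)\geq f^\nu(x)$, while any $t>f^\nu(x)=\sup\{s\mid x\in\D_A^\nu E_s\}$ is strictly larger than the sup and hence not in that set, giving the reverse bound; when $f^\nu(x)\leq 0$, no $t>0$ satisfies $x\in\D_A^\nu E_t$, so the only contribution to the sup over all $t$ comes from $t<0$ and equals $0$.

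The second equality then follows formally from the first via Lemma \ref{minus_minus}. Using $f^-=(-f)^+$ and applying the first equality, now established, to $-f$ in place of $f$ gives $(f^-)^\nu=((-f)^\nu)^+$ almost everywhere on $A$; combining this with Lemma \ref{minus_minus}, which asserts $(-f)^\nu=-f^\nu$ a.e., yields $(f^-)^\nu=(-f^\nu)^+=(f^\nu)^-$ a.e.

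The only real subtlety I anticipate is the borderline case $f^\nu(x)=0$, which must be handled by the strict inequality $t>\sup S\Rightarrow t\notin S$ in the definition of the supremum; apart from that, the passage from the pointwise identity at regular points to the $\mu$-a.e.\ equality on $A$ stated in the corollary is automatic since almost all points of $A$ are regular, and measurability of all the relevant traces is already guaranteed by Lemma \ref{Main_trace}.
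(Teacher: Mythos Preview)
Your proposal is correct and follows the same approach as the paper: the first equality is obtained directly from the definition of the trace (the paper merely asserts this; you supply the elementary case analysis on the sign of $f^\nu(x)$), and the second is derived from the first via Lemma~\ref{minus_minus} by the chain $(f^-)^\nu=((-f)^+)^\nu=((-f)^\nu)^+=(-f^\nu)^+=(f^\nu)^-$, which is exactly what the paper does. One small remark: your argument for the first identity in fact shows it holds at every $x\in\D_A^\nu\Omega$, not merely at regular points or $\mu$-almost everywhere; the ``almost all'' is needed only for the second equality, where Lemma~\ref{minus_minus} is invoked.
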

\begin{proof}
The first equality can be derived directly from definitions. The letter one easily
follows from Lemma \ref{minus_minus}. Indeed,

$(f^-)^\nu=((-f)^+)^\nu=((-f)^\nu)^+=(-(f^\nu))^+=(f^\nu)^-$.

\end{proof}

\begin{lemma}
\label{lem_sum_trace}
For any $BV$-similar functions $f, g$ and almost all
$x\in A$ the following equality holds:
\begin{equation}
\label{sum_trace}
\trN{(f+g)}{\nu}(x)=\trN{f}{\nu}(x)+\trN{g}{\nu}(x).
\end{equation}
\end{lemma}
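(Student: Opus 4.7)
The plan is to prove both directions of $(f+g)^\nu(x)=f^\nu(x)+g^\nu(x)$ separately, each $\mu$-almost everywhere on $\D_A^\nu\Omega$. The substantive half is the lower bound $\geq$, based on the pointwise set inclusion $\{f>s\}\cap\{g>t\}\subset\{f+g>s+t\}$; the reverse inequality then comes for free by applying the lower bound to the $BV$-similar pair $(-f,-g)$ and invoking Lemma~\ref{minus_minus} to replace $(-f)^\nu$, $(-g)^\nu$, $(-(f+g))^\nu$ with their unsigned counterparts, yielding $-(f+g)^\nu\geq -f^\nu-g^\nu$ almost everywhere.

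For the lower bound I would fix countable dense sets $\{s_i\},\{t_j\}\subset\R$ along which the level sets $E_{s_i}=\{f>s_i\}$ and $\tilde E_{t_j}=\{g>t_j\}$ have finite perimeter; such sets exist by the $BV$-similarity of $f$ and $g$. By definition of the trace as a supremum and monotonicity of the level sets in the threshold, whenever $s_i<f^\nu(x)$ one can find some $t>s_i$ with $\Theta^\nu_{\{f>t\}}(x)=1$, whence $\Theta^\nu_{E_{s_i}}(x)=1$ by the inclusion $\{f>t\}\subset E_{s_i}$; analogously for $g$. From the identity $B_x^\nu(r)\setminus(E_{s_i}\cap\tilde E_{t_j})=(B_x^\nu(r)\setminus E_{s_i})\cup(B_x^\nu(r)\setminus\tilde E_{t_j})$ and the limit definition of $\Theta^\nu$, one concludes $\Theta^\nu_{E_{s_i}\cap\tilde E_{t_j}}(x)=1$, and the set inclusion $E_{s_i}\cap\tilde E_{t_j}\subset\{f+g>s_i+t_j\}$ transfers this to $\Theta^\nu_{\{f+g>s_i+t_j\}}(x)=1$. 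Therefore $(f+g)^\nu(x)\geq s_i+t_j$, and letting $s_i\nearrow f^\nu(x)$, $t_j\nearrow g^\nu(x)$ along the two dense sets produces $(f+g)^\nu(x)\geq f^\nu(x)+g^\nu(x)$ pointwise, with no further exceptional set beyond the common domain of $f^\nu$ and $g^\nu$.

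The main obstacle I anticipate is not in the inequalities themselves but in the framework: the level sets $\{f+g>u\}$ of the sum need not be of finite perimeter, so the strictly stated definition of $\D_A^\nu E$ (which required $P(E)<\infty$) does not directly apply to $\{f+g>s_i+t_j\}$. I would sidestep this by reading the trace's defining supremum in its extended form $(f+g)^\nu(x)=\sup\{u:\Theta^\nu_{\{f+g>u\}}(x)=1\}$, which is legitimate since Lemma~\ref{o_one} was the only reason to restrict to finite-perimeter level sets, and in the present argument the density is shown to equal $1$ exactly, so Lemma~\ref{o_one} never needs to be invoked on $f+g$.
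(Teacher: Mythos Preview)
Your proposal is correct and follows essentially the same route as the paper: both prove the inequality $(f+g)^\nu(x)\geq f^\nu(x)+g^\nu(x)$ via the set inclusion $\{f>s\}\cap\{g>t\}\subset\{f+g>s+t\}$ (so that one-sided density $1$ is inherited by the larger set), and then obtain the reverse inequality by applying the first part to $-f,-g$ and invoking Lemma~\ref{minus_minus}. Your explicit discussion of the issue that $f+g$ need not be $BV$-similar, and your resolution by reading the trace as $\sup\{u:\Theta^\nu_{\{f+g>u\}}(x)=1\}$, is a technical point the paper handles only implicitly (it writes $\trN{(f+g)}{\nu}(x)=\sup\{t\mid \Theta_{E_t^{f+g}}^\nu=1\}$ without comment).
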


\begin{proof} First prove that
$\trN{(f+g)}{\nu}(x)\geq \trN{f}{\nu}(x)+\trN{g}{\nu}(x)$ for all $x\in\Gamma$.
Indeed, choose numbers $F<\trN{f}{\nu}(x)$ and $G<\trN{g}{\nu}(x)$ such that
the sets  $E_{F}^{f}=\{x\mid f(x)>F\}$ and $E_{G}^{g}=\{x\mid g(x)>G\}$ have
finite perimeters. Then $\Theta^\nu_{E_{F}^{f}}(x)=1$ and  $\Theta^\nu_{E_{G}^{g}}(x)=1$.

Denote $W=E_{F+G}^{f+g}$. We have
$$W=\{x\mid f(x)+g(x)>F+G\}\supset E_{F}^{f}\cap E_{G}^{g}.$$
Therefore $\Theta^\nu_W(x)=1$ and so
$$\trN{(f+g)}{\nu}(x)=\sup\{t\mid \Theta_{E_t^{f+g}}^\nu=1\}\geq F+G$$
Passing to the limits as $F\rightarrow\trN{f}{\nu}(x)$ and $G\rightarrow\trN{g}{\nu}(x)$,
we get
$$\trN{(f+g)}{\nu}(x)\geq \trN{f}{\nu}(x)+\trN{g}{\nu}(x).$$

Now we will derive the opposite inequality using Lemma \ref{minus_minus}.Indeed
for almost all $x\in A$ we have:
%(by measure $\Hn$)
$$
-\trN{(f+g)}{\nu}(x)=\trN{((-f)+(-g))}{\nu}(x)\geq
\trN{(-f)}{\nu}(x)+\trN{(-g)}{\nu}(x)=-\trN{f}{\nu}(x)-\trN{g}{\nu}(x).
$$
\end{proof}

\begin{lemma}
\label{lem_composition}
Let the function $\phi:\R\to\R$ be increasing and left-continuous.
If  functions $f$ and $\phi\circ f$ are $BV$-similar then
\begin{equation}
\trN{(\phi\circ f)}{\nu}(x)=\phi(\trN{f}{\nu}(x))
\end{equation}
for almost all $x\in A$.
\end{lemma}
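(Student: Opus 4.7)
The plan is to rewrite the super-level sets of $\phi\circ f$ in terms of those of $f$ and read off both traces from the same family. Set $a_t:=\inf\{s\in\R\mid\phi(s)>t\}$. Since $\phi$ is increasing and left-continuous, $\phi(a_t)=\lim_{s\to a_t^-}\phi(s)\leq t$, so $\{s\mid\phi(s)>t\}=(a_t,\infty)$; consequently
\[
E_t^{\phi\circ f}=E_{a_t}^{f},
\]
and the $BV$-similarity of $\phi\circ f$ forces $P(E_{a_t}^{f})<\infty$ for almost every $t$. A second use of left-continuity gives the key equivalence
\[
a_t<r\iff t<\phi(r),\qquad r\in\R,
\]
since $a_t<r$ is equivalent to $\sup_{s<r}\phi(s)>t$, and $\sup_{s<r}\phi(s)=\phi(r)$ by left-continuity.

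Next I would fix a countable dense set $T\subset\R$ of values $t$ with $P(E_t^{f})<\infty$ and apply Lemma \ref{o_one} to each $E_t^{f}$, taking a countable union of the exceptional null sets to obtain a $\mu$-null $N\subset A$ outside which $\Theta_{E_t^{f}}^{\nu}(x)\in\{0,1\}$ for every $t\in T$. Combined with the monotonicity of $s\mapsto\Theta_{E_s^{f}}^{\nu}(x)$, this ensures that for every $x\in A\setminus N$ the number $F:=\trN{f}{\nu}(x)$ is the threshold: $\Theta_{E_s^{f}}^{\nu}(x)=1$ for all $s\in T$ with $s<F$, and $\Theta_{E_s^{f}}^{\nu}(x)=0$ for all $s\in T$ with $s>F$.

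Fix such an $x$ and write $F=\trN{f}{\nu}(x)$. For the inequality $\trN{(\phi\circ f)}{\nu}(x)\geq\phi(F)$, pick any $t<\phi(F)$ with $P(E_t^{\phi\circ f})<\infty$; the equivalence gives $a_t<F$, so one can choose $s\in T\cap(a_t,F)$, obtaining $\Theta_{E_s^{f}}^{\nu}(x)=1$ and hence $\Theta_{E_{a_t}^{f}}^{\nu}(x)=1$ via $E_s^{f}\subset E_{a_t}^{f}$, i.e., $t\leq\trN{(\phi\circ f)}{\nu}(x)$; letting $t\uparrow\phi(F)$ through good values completes this direction. For the reverse inequality, pick $t>\phi(F)$ with $P(E_{a_t}^{f})<\infty$, so $a_t\geq F$; when $a_t>F$, a good $r\in T\cap(F,a_t)$ satisfies $\Theta_{E_r^{f}}^{\nu}(x)=0$, and the inclusion $E_{a_t}^{f}\subset E_r^{f}$ forces $\Theta_{E_{a_t}^{f}}^{\nu}(x)=0$, so $t$ is not in the set defining $\trN{(\phi\circ f)}{\nu}(x)$.

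The main obstacle I anticipate is the boundary configuration $a_t=F$, which arises precisely when $\phi$ has a jump at $F$ and $t$ lies in this jump; in that case the upper-bound argument cannot insert an intermediate good $r\in T$. Resolving this requires showing that the set of $x\in A$ for which $F=\trN{f}{\nu}(x)$ coincides with a jump point of $\phi$ and simultaneously $\Theta_{E_F^{f}}^{\nu}(x)=1$ is $\mu$-negligible: the jumps of $\phi$ form a countable set, and Lemma \ref{Main_trace} applied at each such critical level $F$ together with the $BV$-similarity of $\phi\circ f$ controls the total discrepancy, which is exactly where the hypothesis that $\phi\circ f$ (and not merely $f$) is $BV$-similar is used.
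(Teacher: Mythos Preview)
Your approach is far more detailed than the paper's, which consists of the single line ``the lemma easily follows from $\{x\in\Omega\mid\phi\circ f(x)\geq\phi(t)\}=\{x\in\Omega\mid f(x)\geq t\}$.'' You have correctly isolated the only nontrivial point, namely the configuration $a_t=F$ arising from a jump of $\phi$ at $F=\trN{f}{\nu}(x)$. Unfortunately, your proposed resolution of this case is wrong, and the obstacle is real: the lemma as stated (with \emph{left}-continuity) is false.

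Take $\Omega$ with a flat boundary piece $A$ (e.g.\ a half-ball $\{|y|<1,\,y_n>0\}$ with $A=\{|y|<1,\,y_n=0\}$), $\nu=e_n$, and $f(y)=y_n$. Let $\phi(s)=s$ for $s\leq0$ and $\phi(s)=s+1$ for $s>0$; this is strictly increasing and left-continuous. Then $E_t^f=\Omega$ for every $t\leq0$, so $\Theta_{E_0^f}^\nu(x)=1$ and $\trN{f}{\nu}(x)=0$ for \emph{every} $x\in A$: the set you claim to be $\mu$-null is all of $A$. On $\Omega$ one has $\phi\circ f(y)=y_n+1$, whence $\trN{(\phi\circ f)}{\nu}\equiv1$, while $\phi(\trN{f}{\nu})\equiv\phi(0)=0$. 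Neither Lemma~\ref{Main_trace} nor the $BV$-similarity of $\phi\circ f$ rescues this; both hypotheses hold in the example.

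Your argument is, however, complete and correct when $\phi$ is continuous: then $a_t=F$ together with the definition of $a_t$ forces $\phi(F)=t$, so the case $a_t=F$ with $t>\phi(F)$ simply does not occur, and your two inequalities close up without any exceptional set. Since the paper only ever applies the lemma to $\phi=\exp$ and $\phi=\ln$, this is all that is actually needed downstream. The clean fix to the statement is to assume $\phi$ continuous (or right-continuous, under which $\phi(0)=1$ in the example and the identity holds).
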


\begin{proof}
The lemma easily follows from the equality
\begin{equation*}
\{x\in\Omega \mid  (\phi\circ f(x)))\geq \phi(t)\}=
\{x\in\Omega \mid  f(x)\geq t\}.
\end{equation*}

\end{proof}

\begin{remark}
\label{Rem_BV-sim_diff}
1) Suppose that Hausdorff measure $H_{1}(\phi^{-1}(E))=0$ for any set $E$ of measure 0.
Then the statement that  $\phi\circ f$ is $BV$-similar implies that the function $f$
is $BV$-similar.
This assertion holds definitely if (locally)\linebreak
$|\phi(x)-\phi(y)|\geq \text{const}|x-y|$. The last condition obviously holds if
 $\phi\in C^1$  and $\phi'\neq0$.

2) In the lemma  increasing of $\phi$ can be replaced by the assumption
that the set $\phi^{-1}((t,+\infty))$ is a finite union of intervals and rays for almost all
$t$.
\end{remark}

\begin{lemma} If functions
$f$, $g$, and $fg$ are $BV$-similar then
\begin{equation}
\label{prod_trace}
\trN{(fg)}{\nu}(x)=\trN{f}{\nu}(x)\trN{g}{\nu}(x).
\end{equation}
for almost all
$x\in\Gamma$.
\end{lemma}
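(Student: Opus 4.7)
The plan is to argue pointwise: at almost every $x\in\Gamma$ with $a:=\trN{f}{\nu}(x)$ and $b:=\trN{g}{\nu}(x)$ both finite, $fg$ concentrates near $ab$ in a one-sided density sense, which will force $\trN{(fg)}{\nu}(x)=ab$. Everything comes from the one-sided density machinery of Lemma \ref{o_one}; no additivity or composition lemma is needed. To set up, I fix a countable set $T\subset\R$ that is dense in $\R$ and consists of ``good'' values $t$ for which $P(E_t^f)$, $P(E_t^g)$, $P(E_t^{fg})$ are all simultaneously finite (possible since $f$, $g$, $fg$ are all $BV$-similar), and apply Lemma \ref{o_one} to the countable family $\{E_t^h:h\in\{f,g,fg\},\,t\in T\}$ to discard a $\mu$-null subset of $\Gamma$. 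At every remaining point $x$, each $\Theta^\nu_{E_t^h}(x)\in\{0,1\}$; together with the definition of the trace as a supremum and the monotonicity $E_{t'}^f\subset E_s^f$ for $s<t'$, this yields $\Theta^\nu_{E_s^f}(x)=1$ for every $s\in T$ with $s<a$ and $\Theta^\nu_{E_s^f}(x)=0$ for every $s\in T$ with $s>a$, and analogously for $g$.

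The core estimate runs as follows. Given $\epsilon>0$, I pick $s_1,s_2\in T$ with $a-\epsilon<s_1<a<s_2<a+\epsilon$ and $r_1,r_2\in T$ with $b-\epsilon<r_1<b<r_2<b+\epsilon$, and set
\[
F=(E_{s_1}^f\setminus E_{s_2}^f)\cap(E_{r_1}^g\setminus E_{r_2}^g).
\]
The complement of $F$ inside $\Omega$ is the union of the four sets $\Omega\setminus E_{s_1}^f$, $E_{s_2}^f$, $\Omega\setminus E_{r_1}^g$, $E_{r_2}^g$, each of one-sided density $0$ at $x$; combined with $\Theta^\nu_\Omega(x)=1$ this yields $\Theta^\nu_F(x)=1$. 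Pointwise on $F$ we have $|f-a|<\epsilon$ and $|g-b|<\epsilon$, so
\[
|fg-ab|\leq|f|\,|g-b|+|b|\,|f-a|\leq(|a|+\epsilon)\epsilon+|b|\epsilon=:\delta(\epsilon).
\]
Hence $F\subset E_{ab-\delta(\epsilon)}^{fg}$ while $F\cap E_{ab+\delta(\epsilon)}^{fg}=\emptyset$, which forces $\Theta^\nu_{E_{ab-\delta(\epsilon)}^{fg}}(x)=1$ and $\Theta^\nu_{E_{ab+\delta(\epsilon)}^{fg}}(x)=0$. Therefore $ab-\delta(\epsilon)\leq\trN{(fg)}{\nu}(x)\leq ab+\delta(\epsilon)$, and letting $\epsilon\to0$ finishes the finite case.

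The main obstacle, and the only genuine subtlety, is the possibility that $a$ or $b$ is infinite. The density argument adapts readily to ``compatible'' infinities: if $a=+\infty$ and $b>0$ then $\Theta^\nu_{E_s^f}(x)=1$ for every $s\in T$, while $\Theta^\nu_{E_{r_1}^g}(x)=1$ for any $r_1\in T$ with $0<r_1<b$, so the intersection has one-sided density $1$, giving $\trN{(fg)}{\nu}(x)\geq sr_1$ for every $s$, hence $=+\infty=ab$; the remaining sign/infinity combinations are symmetric. The genuinely delicate point is the $0\cdot\infty$ indeterminacy: one must verify that the set of $x$ at which $|a|=\infty$ and $b=0$ (or vice versa) has $\mu$-measure zero, which I expect to follow from the BV-similarity of $fg$ — specifically from $P(E_t^{fg})<\infty$ for a.e.\ $t$ — but this is the step that will require the most careful bookkeeping.
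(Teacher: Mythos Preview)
Your approach is genuinely different from the paper's. The paper first reduces to the case $f,g\geq 1$ (via the decomposition $f=(f^++1)-(f^-+1)$ together with Lemma~\ref{minus_minus} and Lemma~\ref{lem_sum_trace}), and then for $f,g\geq 1$ applies the composition Lemma~\ref{lem_composition} with $\phi=\ln$ and $\phi=\exp$ to obtain
\[
\trN{(fg)}{\nu}=e^{\trN{(\ln f+\ln g)}{\nu}}=e^{\trN{(\ln f)}{\nu}+\trN{(\ln g)}{\nu}}=\trN{f}{\nu}\,\trN{g}{\nu}.
\]
Your direct one-sided density argument bypasses both the additivity and composition lemmas, and your treatment of the case where $a=\trN{f}{\nu}(x)$ and $b=\trN{g}{\nu}(x)$ are both finite is correct and clean; the same goes for the ``compatible'' infinite cases.

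The genuine gap is exactly the one you flag, and unfortunately your proposed resolution fails: $BV$-similarity of $fg$ does \emph{not} force the $0\cdot\infty$ configuration to occur only on a $\mu$-null set. Take $\Omega=(0,1)^2\subset\R^2$, $f(x,y)=1/y$, $g(x,y)=y$. All level sets of $f$, $g$, and $fg\equiv 1$ are rectangles, so all three are $BV$-similar; yet along the entire edge $\{y=0\}$ one has $\trN{f}{\nu}=+\infty$, $\trN{g}{\nu}=0$, and $\trN{(fg)}{\nu}=1$. Thus the indeterminate product $\infty\cdot 0$ occurs on a set of positive $\mu$-measure and the identity has no meaning there. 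This is arguably a defect of the statement rather than of your method: the paper's logarithm trick avoids the issue once $f,g\geq 1$, but the reduction step---reassembling $fg$ from the four products $(f^\pm+1)(g^\pm+1)$ via additivity---runs into the same obstruction, now in the form $\infty-\infty$, at precisely these points. In short, your proof is complete wherever both traces are finite (and in all sign-consistent infinite cases), and neither approach settles the $0\cdot\infty$ configuration without an extra hypothesis such as a.e.\ finiteness of the traces.
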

\begin{proof}
It is enough to prove   \eqref{prod_trace} only for
$f,g\geq1$.
It follows from Lemma \ref{minus_minus}, Corollary \ref{cor_pos_neg}
and the equality
$f=(f^++1)-(f^-+1)$.

In this case  Lemma
\ref{lem_sum_trace}, Lemma \ref{lem_composition},
and Remark \ref{Rem_BV-sim_diff}  imply
\begin{equation*}
\trN{(fg)}{\nu}=\trN{(e^{\ln(fg)})}{\nu}=
e^{\trN{(\ln f+\ln g)}{\nu}}=
e^{\trN{(\ln f)}{\nu}+\trN{(\ln g)}{\nu}}=
e^{\ln (\trN{f}{\nu})+\ln (\trN{g}{\nu})}=
\trN{f}{\nu} \trN{g}{\nu}.
\end{equation*}
\end{proof}

\section{Integral formula for norm of trace}
\begin{definition}
Let us define a norm of the trace on $\D\Omega$ of a function $f\in BV(\Omega)$
as follows:
\begin{equation}
\label{tracenorm}
\tracenorm{f}
=
\int_{\D^*\Omega} |\trU{f}|\,d\Hn
+\int_{\capD \Omega} (\trU{f}- \trD{f})\,d\Hn.
\end{equation}
If $\tracenorm{f}<\infty$ we will say that  $f$ has the summable trace.
\end{definition}

\begin{lemma}
\label{Lem_trace_norm}
\begin{equation}
\label{form_norm_plus_minus}
\tracenorm{f}
=\tracenorm{f^+}
+\tracenorm{f^-}.
\end{equation}
\end{lemma}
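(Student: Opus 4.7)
The plan is to rewrite $\trU f$ and $\trD f$ in terms of the traces of $f^+$ and $f^-$ and then to substitute into the definition \eqref{tracenorm}. The main tool is Corollary~\ref{cor_pos_neg}, which yields $(\trN{f}{\nu})^+ = \trN{(f^+)}{\nu}$ and $(\trN{f}{\nu})^- = \trN{(f^-)}{\nu}$ almost everywhere, for either choice of normal $\pm\nu$.

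Combining those identities with the fact that $t\mapsto t^+$ is nondecreasing and $t\mapsto t^-$ is nonincreasing, I move $(\cdot)^\pm$ inside the $\max$ and $\min$ appearing in $\trU f=\max(f^\nu,f^{-\nu})$ and $\trD f=\min(f^\nu,f^{-\nu})$. Using $\max(a,b)^+=\max(a^+,b^+)$, $\max(a,b)^-=\min(a^-,b^-)$, and the dual identities for $\min$, this yields on $\capD\Omega$
$$(\trU f)^+ = \trU{(f^+)},\qquad (\trU f)^- = \trD{(f^-)},\qquad (\trD f)^+ = \trD{(f^+)},\qquad (\trD f)^- = \trU{(f^-)}.$$
On $\D^*\Omega$ only a single normal is used in the definition of $\trU{\,\cdot\,}$, so more simply $(\trU f)^\pm = \trU{(f^\pm)}$ there.

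Using $x=x^+-x^-$ I then obtain, almost everywhere on $\capD\Omega$,
$$\trU f - \trD f = \bigl(\trU{(f^+)}-\trD{(f^+)}\bigr)+\bigl(\trU{(f^-)}-\trD{(f^-)}\bigr),$$
each summand being nonnegative, while on $\D^*\Omega$, since the traces of $f^\pm\geq 0$ are themselves nonnegative,
$$|\trU f| = (\trU f)^+ + (\trU f)^- = \trU{(f^+)} + \trU{(f^-)} = |\trU{(f^+)}| + |\trU{(f^-)}|.$$
Integrating the first relation over $\capD\Omega$ and the second over $\D^*\Omega$ and adding yields exactly $\tracenorm{f^+}+\tracenorm{f^-}$.

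The only subtlety is the bookkeeping of the swap between $\trU{\,\cdot\,}$ and $\trD{\,\cdot\,}$ when $(\cdot)^-$ is pulled through $\max$ or $\min$ on $\capD\Omega$; the key observation is that this swap is exactly what makes the difference $\trU f - \trD f$ split cleanly into an $f^+$-piece and an $f^-$-piece, each separately nonnegative. No limiting arguments or $BV$-estimates are required -- the lemma is a direct consequence of Corollary~\ref{cor_pos_neg} together with the piecewise definitions of $\trU f$ and $\trD f$.
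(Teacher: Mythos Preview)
Your proof is correct and proceeds along essentially the same lines as the paper's: both rely solely on Corollary~\ref{cor_pos_neg} and an elementary pointwise identity showing that $\trU{f}-\trD{f}$ decomposes into the corresponding differences for $f^+$ and $f^-$. The paper writes $\trU{f}-\trD{f}=|f^{\nu}-f^{-\nu}|$, expands $f^{\pm\nu}=(f^+)^{\pm\nu}-(f^-)^{\pm\nu}$, and then (implicitly) uses that $(f^\nu)^+-(f^{-\nu})^+$ and $(f^\nu)^- - (f^{-\nu})^-$ always have opposite signs to split the absolute value; you instead exploit the $\max/\min$ definitions together with the monotonicity identities $\max(a,b)^+=\max(a^+,b^+)$, $\max(a,b)^-=\min(a^-,b^-)$, which makes the sign bookkeeping more transparent. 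Your version also treats the $\D^*\Omega$ integral explicitly, whereas the paper's proof handles only the $\capD\Omega$ term and leaves the first integral unstated.
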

\begin{proof}
We have
\begin{equation*}
\begin{split}
\trU{f}-\trD{f}=&|\trN{f}{\nu}-\trN{f}{-\nu}|=
|\trN{((f)^+)}{\nu}-\trN{((f)^-)}{\nu}-\trN{((f)^+)}{-\nu}
+\trN{((f)^-)}{-\nu}|\\
=&
|\trN{(f^+)}{\nu}-\trN{(f^+)}{-\nu}|+|\trN{(f^-)}{-\nu}-\trN{(f^-)}{-\nu}|\\
=&
(\trU{(f^+)}-\trD{(f^+)})+(\trU{(f^-)}-\trD{(f^-)}).
\end{split}
\end{equation*}
\end{proof}
\begin{lemma}
Suppose that a function $f\in\BVO$ is nonnegative   and has the
summable trace on $\D\Omega$.
Moreover, let a vector-function $\eta\colon\Gamma\to\R^k$, $k\geq 1$,
is measurable and bounded.
%$\mu$-
Then
\begin{equation}
\label{one_int_form}
\int_0^{+\infty}
\int_{\Gamma\cap\D^*E_t}\eta\,d\Hn\,dt=
\int_{\D^*\Omega}\trU{f}\eta\,d\Hn+
\int_{\capD\Omega}(\trU{f}-\trD{f})\eta\,d\Hn.
\end{equation}
\end{lemma}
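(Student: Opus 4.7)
The plan is to reduce the left-hand side to the right-hand side in three stages: decompose $\Gamma\cap\D^*E_t$ via \eqref{Dstar}, interchange the order of integration by Fubini, and recognise the resulting $t$-integrals as the traces $f^*$ and $f_*$ through a layer-cake identity.

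First I would apply \eqref{Dstar} with $A=\Gamma$ and $E=E_t$, valid for almost every $t$, so that
\[
\int_{\Gamma\cap\D^*E_t}\eta\,d\Hn=\int_{\capU{E_t}}\eta\,d\Hn-\int_{\capD{E_t}}\eta\,d\Hn,
\]
using $\capD{E_t}\subset\capU{E_t}$. Then I would integrate in $t$ over $[0,+\infty)$ and swap the order of integration by Fubini. Because $E_t$ is non-increasing in $t$, the one-sided densities $\Theta^\nu_{E_t}(x)$ are non-increasing in $t$, and hence $\capU{E_t}$ and $\capD{E_t}$ are non-increasing in $t$ as well. Combined with the definitions of the traces and with $f^*,f_*\ge 0$ (consequences of $f\ge 0$), the layer-cake identity yields
\[
\int_0^\infty\chi_{\capU{E_t}}(x)\,dt=f^*(x)\text{ on }\Gamma,\qquad\int_0^\infty\chi_{\capD{E_t}}(x)\,dt=f_*(x)\text{ on }\capD\Omega,
\]
with the second integrand vanishing on $\D^*\Omega=\Gamma\setminus\capD\Omega$ because $E_t\subset\Omega$ and monotonicity of one-sided densities force $\capD{E_t}\subset\capD\Omega$.

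Assembling the pieces and decomposing $\Gamma=\D^*\Omega\sqcup\capD\Omega$ via \eqref{DstarOmega} gives
\[
\int_\Gamma f^*\eta\,d\Hn-\int_{\capD\Omega}f_*\eta\,d\Hn=\int_{\D^*\Omega}f^*\eta\,d\Hn+\int_{\capD\Omega}(f^*-f_*)\eta\,d\Hn,
\]
which is precisely the right-hand side of \eqref{one_int_form}.

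The main technical hurdle is justifying the Fubini swap. Absolute integrability on $[0,+\infty)\times\Gamma$ follows from boundedness of $\eta$ together with the summable-trace hypothesis, which bounds $\int_\Gamma f^*\,d\Hn$ via the same layer-cake identity used in reverse. Product-measurability of $\{(t,x):x\in\capU{E_t}\}$ is immediate from monotonicity in $t$ and $\Hn$-measurability of $f^*$ (Lemma~\ref{Main_trace}), which identifies this set, modulo a subset of the graph of $f^*$ (of product measure zero), with $\{(t,x):t<f^*(x)\}$. As an alternative one may invoke part~2 of the Remark after Lemma~\ref{Main_trace}, asserting $\Hn$-a.e.\ agreement of $\capU{E_t}$ with $\{f^*\ge t\}$ for all but countably many $t$, which suffices just as well.
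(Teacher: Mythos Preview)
Your argument follows essentially the same route as the paper's: split $\Gamma\cap\D^*E_t$ via \eqref{Dstar}, use Lemma~\ref{Main_trace} (and its Remark) to identify $\capU{E_t}$ and $\capD{E_t}$ with the super-level sets of $f^*$ and $f_*$, and integrate in $t$ by layer-cake; the paper's presentation is terser but structurally identical.

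One correction to your Fubini justification: the summable-trace hypothesis does \emph{not} bound $\int_\Gamma f^*\,d\Hn$ in general, since
\[
\tracenorm{f}=\int_{\D^*\Omega}f^*\,d\Hn+\int_{\capD\Omega}(f^*-f_*)\,d\Hn
\]
leaves $\int_{\capD\Omega}f_*\,d\Hn$ (and hence $\int_{\capD\Omega}f^*\,d\Hn$) entirely unconstrained. The clean fix is not to split before swapping: dominate $|\eta|\,\chi_{\Gamma\cap\D^*E_t}$ by $\|\eta\|_\infty\,\chi_{\Gamma\cap\D^*E_t}$ and apply Tonelli to this nonnegative majorant, which by your own layer-cake computation (taken with $\eta\equiv 1$) integrates to $\|\eta\|_\infty\tracenorm{f}<\infty$. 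Fubini then applies to the signed integrand, and the pointwise inner $t$-integral of $\chi_{\capU{E_t}}-\chi_{\capD{E_t}}$ on $\capD\Omega$ is, by the monotonicity you noted, the length of an interval and hence equals $f^*-f_*$ without ever requiring $f^*$ or $f_*$ to be separately integrable over $\capD\Omega$.
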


\begin{proof}
Clearly,  it  suffices to consider only the  case  $k=1$.
Define
\begin{gather*}
\{x\in\capU{\Omega}\mid \trU{f}>t\}=E_t^1,\;\;
\{x\in\capD{\Omega}\mid \trU{f}>t\}=E_t^2 \\
\{x\in\capD{\Omega}\mid \trD{f}>t\}=L_t^2, \,\,
\{x\in\D^*{\Omega}\mid \trU{f}>t\}=E_t^*.
\end{gather*}
By \eqref{Dstar} and Lemma
\ref{Main_trace},  we have
\begin{gather*}
\label{proof_one_int_form}
\int^{+\infty}_0
\int_{\Gamma\cap\D^*E_t}\eta\,d\Hn\, dt
=\int^{+\infty}_0
\Big(
\int_{\Gamma\cap\capU{E_t}}\eta\,d\Hn-
\int_{\Gamma\cap\capD{E_t}}\eta\,d\Hn
\Big)
\, dt\\
=
\int^{+\infty}_0
\Big(
\int_{E_t^1}\eta\,d\Hn
-
\int_{L_t^2}\eta\,d\Hn
\Big)
\, dt\\
=
\int^{+\infty}_0
\Big(
\int_{E_t^*}\eta\,d\Hn
+
\int_{E_t^2}\eta\,d\Hn
-
\int_{L_t^2}\eta\,d\Hn
\Big)
\, dt\\
=
\int^{+\infty}_0
\Big(
\int_{E_t^*}\eta\,d\Hn
+
\int_{E_t^2}\eta\,d\Hn-\int_{L_t^2}\eta\,d\Hn
\Big)
\, dt\\
= \int_{\D^*\Omega} \trU{f}\eta\,d\Hn
+\int_{\capD \Omega} (\trU{f}-\trD{f})\eta\,d\Hn.
\end{gather*}

\end{proof}

\begin{corollary}
If a function $f\in\BVO$ is nonnegative then
\begin{equation}
\label{one_int_form_eta=1}
\tracenorm{f}=
\int_0^{+\infty}
\Hn(\Gamma\cap\D^*E_t)\,dt.
\end{equation}
In addition,  $f$ has the summable trace if and only if the right part
of
\eqref{one_int_form_eta=1}
is finite.
\end{corollary}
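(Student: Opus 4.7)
The plan is to obtain the corollary as the specialization of the preceding lemma to $\eta\equiv 1$ (taking $k=1$), which is trivially measurable and bounded. Under this choice the inner integral on the left side of \eqref{one_int_form} reduces to $\mu(\Gamma\cap\D^*E_t)$, and the right side of \eqref{one_int_form} coincides with the definition \eqref{tracenorm} of $\tracenorm{f}$ term by term; here nonnegativity of $f$ gives $\trU f\geq 0$ (so $|\trU f|=\trU f$) and $\trU f\geq \trD f$. This proves \eqref{one_int_form_eta=1} whenever $f$ has summable trace, which handles the ``summable trace $\Rightarrow$ RHS finite'' direction of the second assertion.

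For the converse direction I would revisit the proof of the preceding lemma with $\eta\equiv 1$ and $f\geq 0$ in mind, and observe that the entire chain of equalities remains valid as identities in $[0,+\infty]$, so the summable-trace hypothesis is not genuinely used in this special case. Concretely, \eqref{Dstar} gives the set identity $\Gamma\cap\D^*E_t=\capU E_t\setminus\capD E_t$, which admits the disjoint decomposition
\[
\Gamma\cap\D^*E_t=(\D^*\Omega\cap\capU E_t)\cup\bigl(\capD\Omega\cap(\capU E_t\setminus\capD E_t)\bigr);
\]
Tonelli together with Lemma \ref{Main_trace} converts the $t$-integral of the first summand into $\int_{\D^*\Omega}\trU f\,d\mu$, and of the second into $\int_{\capD\Omega}(\trU f-\trD f)\,d\mu$ via the unambiguous Fubini identity $\int_0^{+\infty}\chi_{\{\trD f\leq t<\trU f\}}(x)\,dt=\trU f(x)-\trD f(x)$. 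This yields \eqref{one_int_form_eta=1} with both sides in $[0,+\infty]$, and the finiteness equivalence follows immediately.

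The main obstacle is purely bookkeeping: one must check that no $\infty-\infty$ ambiguity arises when promoting the lemma's argument to the extended-real setting. Every subtraction appearing in the lemma's proof can be recast as a $\mu$-measure of a disjoint set-theoretic difference (for example $\mu(\capU E_t)-\mu(\capD E_t)$ is replaced by $\mu(\capU E_t\setminus\capD E_t)$, which is bounded above by $P(E_t)<\infty$ for a.e.~$t$), and each such difference is always well-defined. Once this rephrasing is in place, no further analytic input is required beyond Lemma \ref{Main_trace}, \eqref{Dstar}, and Tonelli.
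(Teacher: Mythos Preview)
Your proposal is correct and follows essentially the same route as the paper: substitute $\eta\equiv 1$ in \eqref{one_int_form} for the forward direction, and for the converse revisit the chain of equalities in the proof of that lemma with $\eta=1$, noting that everything is well-defined in $[0,+\infty]$. Your explicit handling of the potential $\infty-\infty$ issue via disjoint decompositions is more careful than the paper's terse ``read the equalities from right to left'', but the underlying argument is the same.
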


Indeed, if $\tracenorm{f}<\infty$ then we can obtain \eqref{one_int_form_eta=1}
substituting $\eta=1$ in \eqref{one_int_form}.
Now let  the right part of \eqref{one_int_form_eta=1} is finite.
Then it suffices to substitute $\eta=1$
in the latter equalities of the proof of Lemma \ref{Lem_trace_norm} and read them
from right to left to prove \eqref{one_int_form_eta=1}.

\section{Summability of  traces and 
%some 
integral inequalities}

In this and the next sections, we are going to show that in fact  all the  integral
inequalities and other results on traces obtained in \cite{BM}, \cite{M}
can be generalized to the case when the boundary of a region
is a countably rectifiable set.
As the integral inequalities obtained in \cite{M} are various, we restrict
ourselves with only  key examples.

For a set $A\subset \bar\Omega$, denote by $\tau_A$ the infimum of numbers $\beta$
such that the inequality
$\mu(\D^*E\cap\Gamma)\leq\beta\mu(\D^*E\cap\Omega)$ holds for all
$E\subset \Omega$  satisfying
$$\V(A\cap E)+\mu(A\cap\D^*E)=0.$$

Note that $\tau_A$ goes to infinity as
 $A$ vanishes.
Indeed, we can set $E=\Omega\setminus A$.

The following theorem generalizes Theorem 6.5.3/1 in \cite{M}.
 \begin{theorem}
 \label{summability}
Let the boundary  $\D\Omega$ of a region $\Omega$ be a countably rectifiable set,
 $D$ be a subset of $\bar\Omega$.
 Then for any function  $f\in BV(\Omega)$  satisfying the condition
$f(A\cap\Omega)=0$, $f^*(A\cap\Gamma)=0$ the  inequality
\begin{equation}
\label{norms_ineq}
\tracenorm{f}\leq\tau_A\|  f\|_{BV(\Omega)}
\end{equation}
 holds and the constant  $\tau_A$ is exact.
 \end{theorem}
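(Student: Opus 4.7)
The plan is to reduce to nonnegative $f$, express both norms in \eqref{norms_ineq} as integrals over the superlevel sets $E_t=\{f>t\}$, and apply the defining inequality of $\tau_A$ to each $E_t$ separately.

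First I would use Lemma \ref{Lem_trace_norm} together with the splitting $\|f\|_{BV(\Omega)}=\|f^+\|_{BV(\Omega)}+\|f^-\|_{BV(\Omega)}$ (immediate from \eqref{Flem-Rish_formula_0}) and Corollary \ref{cor_pos_neg} to reduce to the case $f\geq 0$: the hypotheses $f=0$ on $A\cap\Omega$ and $\trU{f}=0$ on $A\cap\Gamma$ pass to $f^\pm$ separately. For $f\geq 0$, combine \eqref{one_int_form_eta=1} with the Fleming--Rishel formula \eqref{Flem-Rish_formula_0}, together with $P_\Omega(E_t)=\Hn(\D^*E_t\cap\Omega)$ and the observation that $\D^*E_t\subset\Omega\cup\Gamma$ up to $\mu$-null sets (since $E_t\subset\Omega$, any density-$\tfrac12$ point of $E_t$ on $\D\Omega$ must lie in $\Gamma$), to obtain
\begin{equation*}
\tracenorm{f}=\int_0^{+\infty}\Hn(\Gamma\cap\D^*E_t)\,dt,\qquad
\|f\|_{BV(\Omega)}=\int_0^{+\infty}\Hn(\Omega\cap\D^*E_t)\,dt.
\end{equation*}

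The main step is to verify, for almost every $t>0$ with $P(E_t)<\infty$, the hypothesis of the $\tau_A$-definition, namely $\V(A\cap E_t)+\Hn(A\cap\D^*E_t)=0$. The volume part is immediate: since $E_t\subset\Omega$ and $f=0$ on $A\cap\Omega$, the set $A\cap E_t$ is empty for every $t>0$. For the $\mu$-term I would split $A\cap\D^*E_t$ into its parts in $\Gamma$ and in $\Omega$. The $\Gamma$-piece is empty: identity \eqref{Dstar} (with $A=\Gamma$) gives $\Gamma\cap\D^*E_t\subset\capU{E_t}$, and $\trU{f}(x)=\sup\{t\mid x\in\capU{E_t}\}=0$ on $A\cap\Gamma$ prevents any such point from lying in $\capU{E_t}$ for $t>0$. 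The $\Omega$-piece is the technical crux: one needs that for a.e.\ $t>0$ the interior reduced boundary $\D^*E_t\cap\Omega$ meets the zero set of $f$ (and hence $A\cap\Omega$) only in a $\mu$-null subset. This follows from a coarea-type argument for $BV$-functions: at $\mu$-a.e.\ point of $\D^*E_t$, $f$ has a well-defined approximate one-sided limit equal to $t$, so such points cannot lie in $\{f=0\}$ unless $t=0$.

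Once the hypothesis is verified, the definition of $\tau_A$ gives $\Hn(\Gamma\cap\D^*E_t)\leq\tau_A\,\Hn(\Omega\cap\D^*E_t)$ for a.e.\ $t>0$, and integrating in $t$ against the two integral representations above yields \eqref{norms_ineq}. Exactness of $\tau_A$ follows by testing with $f=\chi_E$ for sets $E\subset\Omega$ satisfying $\V(A\cap E)+\Hn(A\cap\D^*E)=0$ whose ratios $\Hn(\Gamma\cap\D^*E)/\Hn(\Omega\cap\D^*E)$ approach the supremum in the definition of $\tau_A$: indeed $\tracenorm{\chi_E}=\Hn(\Gamma\cap\D^*E)$ by \eqref{one_int_form_eta=1}, while $\|\chi_E\|_{BV(\Omega)}=P_\Omega(E)=\Hn(\Omega\cap\D^*E)$. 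The principal obstacle is the rigorous control of $\Hn(A\cap\Omega\cap\D^*E_t)$, i.e.\ ruling out exceptional behavior of the interior reduced boundary on the zero set of $f$ for a set of $t$'s of positive measure; everything else is Fleming--Rishel bookkeeping plus the $\tau_A$-inequality.
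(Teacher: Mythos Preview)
Your proposal is correct and follows essentially the same route as the paper: reduce to $f\ge 0$, invoke \eqref{one_int_form_eta=1} for $\tracenorm{f}$ and the Fleming--Rishel formula for $\|f\|_{BV(\Omega)}$, check that $E_t$ satisfies the admissibility condition in the definition of $\tau_A$ for a.e.\ $t>0$, and integrate the pointwise inequality. The paper simply asserts ``Note that $\V(A\cap E_t)+\mu(A\cap\D^*E_t)=0$ for almost all $t>0$'' without argument, and it does not spell out the exactness claim at all; your write-up is more careful on both points, in particular in splitting $A\cap\D^*E_t$ into its $\Gamma$-part (handled via \eqref{Dstar} and the hypothesis $\trU f=0$ on $A\cap\Gamma$) and its $\Omega$-part, and in flagging the latter as the only place where a nontrivial $BV$ structure theorem is needed.
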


 \begin{proof} We can assume that $\|  f\|_{BV(\Omega)}<\infty$.
Suppose for a while that $f\geq 0$. Note that
 $\V (A\cap E_t)+\mu (A\cap\D^*E_t)=0$ for almost all $t>0$. Then by Corollary
 \eqref{one_int_form_eta=1} and the definition of  $\tau_A$ we have
 \begin{equation}
\tracenorm{f}= \int_0^{+\infty}\Hn(\Gamma\cap\D^*E_t)\,dt\leq\tau_A
\int_0^{+\infty}P_{\Omega}(E_t)\,dt=\tau_A\|  f\|_{BV(\Omega)}.
\end{equation}

Let now the  function $f$ be not necessary nonnegative. By Lemma
\ref{form_norm_plus_minus} we have
\begin{equation}
\begin{split}
\tracenorm{f}&=\tracenorm{f^+}+\tracenorm{f^-}\\
&\leq
\tau_A(\|  f^+\|_{BV(\Omega)}+\|  f^-\|_{BV(\Omega)})=
\tau_A\|  f\|_{BV(\Omega)}.
\end{split}
\end{equation}
%Последнее равенство доказывается просто, см., например, \cite{M}, 6.1.2/6.
%%% Мы этим уже регулярно пользовались.
 \end{proof}

The next theorem generalizes Theorem 6.5.4/1 in \cite{M}.

\begin{theorem}
\label{local_emb}
Suppose that the  boundary of a region  $\Omega$ is a
countably rectifiable set.
Then in order for any
function $f \in BV(\Omega)$ to satisfy the inequality
\begin{equation}
\label{local_ineq}
\tracenorm{f} \leq k (\|f\|_{BV(\Omega)}+\|f\|_{L(\Omega)})
\end{equation}
with a constant $k$ independent on  $f$, it is necessary and sufficient
that there exists a constant  $\delta>0$ such that
the inequality
\begin{equation}\label{local_isop}
\mu(\D^*E\cap\D^*\Omega) \le k_1 P_\Omega(E)
\end{equation}
holds for every measurable  set
$E \subset \Omega$ with $\diam E\leq\delta$,
where the constant  $k_1$ does not depend on $E$.
\end{theorem}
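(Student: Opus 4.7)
The plan is to prove both implications: necessity by testing \eqref{local_ineq} on characteristic functions $f=\chi_E$ of small-diameter sets $E\subset\Omega$, and sufficiency by integrating the local isoperimetric inequality \eqref{local_isop} against the layer-cake identity \eqref{one_int_form_eta=1} for $\tracenorm{\cdot}$, after localizing with a bounded-multiplicity cover of $\Omega$ by small balls.

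For necessity, a direct unpacking of the trace definitions gives, for every $E\subset\Omega$ with $P_\Omega(E)<\infty$, the identity $\tracenorm{\chi_E}=\Hn(\Gamma\cap\D^*E)$: on $\D^*\Omega$, the inclusion $E\subset\Omega$ together with Remark \ref{finit_per_rect} and the monotonicity of one-sided densities forces the Federer normals of $E$ and $\Omega$ to coincide at every point of $\D^*E\cap\D^*\Omega$, so $\trU{\chi_E}=\chi_{\D^*E\cap\D^*\Omega}$ on $\D^*\Omega$; on $\capD\Omega$, Corollary \ref{Omega_0_1} combined with \eqref{Dstar} yields $\trU{\chi_E}-\trD{\chi_E}=\chi_{\D^*E\cap\capD\Omega}$. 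The same normal comparison shows $\D^*E\cap\D\Omega\subset\Gamma$, so $P(E)=P_\Omega(E)+\Hn(\Gamma\cap\D^*E)$. Substituting $f=\chi_E$ (with $\|\chi_E\|_{BV(\Omega)}=P_\Omega(E)$ and $\|\chi_E\|_{L(\Omega)}=\V(E)$) into \eqref{local_ineq}, and combining with the Euclidean isoperimetric inequality and $\V(E)^{1/n}\leq c_n\delta$ (from $\diam E\leq\delta$), I obtain
\begin{equation*}
\Hn(\Gamma\cap\D^*E)\leq k\bigl(P_\Omega(E)+c'_n\delta\,P_\Omega(E)+c'_n\delta\,\Hn(\Gamma\cap\D^*E)\bigr).
\end{equation*}
Choosing $\delta$ so small that $kc'_n\delta<1/2$ lets me absorb the last term on the left, yielding \eqref{local_isop}.

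For sufficiency, Lemma \ref{Lem_trace_norm} lets me assume $f\geq 0$, so that $\tracenorm{f}=\int_0^{+\infty}\Hn(\Gamma\cap\D^*E_t)\,dt$ by \eqref{one_int_form_eta=1}. The level sets $E_t$ need not have small diameter, so I localize: choose a locally finite cover $\{B_i\}$ of $\Omega$ by open balls of radius $\delta/4$ with uniformly bounded multiplicity $M=M(n)$ and with generic radii (so that $\Hn(\D B_i\cap\D^*E_t)=0$). Since $E_t\cap B_i$ has diameter $<\delta$, applying \eqref{local_isop} to each $E_t\cap B_i$, summing with the bounded-overlap estimate, and accounting for the extra interior perimeter created by the cuts, I obtain
\begin{equation*}
\Hn(\Gamma\cap\D^*E_t)\leq k_1\sum_i P_\Omega(E_t\cap B_i)\leq k_1\Bigl(M\,P_\Omega(E_t)+\sum_i\Hn(\D B_i\cap E_t)\Bigr).
\end{equation*}
A Fubini/coarea argument over the ball radii gives $\sum_i\Hn(\D B_i\cap E_t)\leq C(n)\delta^{-1}\V(E_t)$. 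Integrating in $t$ and invoking Fleming--Rishel \eqref{Flem-Rish_formula_0} together with $\int_0^{+\infty}\V(E_t)\,dt=\|f\|_{L(\Omega)}$ yields \eqref{local_ineq} with $k=k_1\max(M,C(n)\delta^{-1})$.

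The main obstacle is the sufficiency direction: rigorously justifying that restricting $E_t$ to each $B_i$ adds reduced boundary only on the interior spheres $\D B_i\cap\Omega$ (not on $\D\Omega$, which requires handling balls meeting $\D\Omega$ separately), and establishing the spherical-surface bound $\sum_i\Hn(\D B_i\cap E_t)\leq C(n)\delta^{-1}\V(E_t)$. The latter is the crucial estimate: it is what converts the uncontrolled extra perimeter produced by the covering into the $L^1$-norm term on the right-hand side of \eqref{local_ineq}.
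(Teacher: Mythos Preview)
For necessity you do exactly what the paper does (insert $f=\chi_E$), and you correctly spell out the absorption of the $\V(E)$ term that the paper leaves implicit.

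For sufficiency your route differs from the paper's. The paper derives \eqref{local_ineq} from Theorem~\ref{summability} by means of a partition of unity: one writes $f=\sum_i\phi_i f$ with smooth $\phi_i$ supported in balls of diameter $<\delta$, applies Theorem~\ref{summability} to each $\phi_i f$ (taking $A$ to be the complement of the support of $\phi_i$), and sums; the extra $BV$-norm produced by the cutoffs is controlled by $\sup|\nabla\phi_i|\cdot\|f\|_{L^1}$ on each patch, which is precisely what supplies the $\|f\|_{L(\Omega)}$ term. You instead localize the level sets $E_t$ directly by intersecting with balls and control the extra perimeter via a Fubini/coarea estimate on the spheres $\partial B_i$. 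Both are standard localization devices; the partition-of-unity version is cleaner in that it sidesteps your flagged ``main obstacle'' (the spherical-surface bound and the bookkeeping of where $\partial^*(E_t\cap B_i)$ lands relative to $\partial\Omega$), while your version stays closer to the geometric isoperimetric condition and avoids invoking Theorem~\ref{summability}.

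One genuine gap to watch in your sufficiency chain: as stated, \eqref{local_isop} controls $\Hn(\partial^*E\cap\partial^*\Omega)$, but the step ``$\Hn(\Gamma\cap\partial^*E_t)\leq k_1\sum_iP_\Omega(E_t\cap B_i)$'' requires the $\Gamma$-version. Summing \eqref{local_isop} over the pieces $E_t\cap B_i$ bounds only the $\partial^*\Omega$ contribution to $\tracenorm{f}$ and leaves the jump term on $\capD\Omega$ uncontrolled. The same mismatch is present in the paper's own sketch, since the constant $\tau_A$ in Theorem~\ref{summability} is defined through $\Hn(\partial^*E\cap\Gamma)$, not $\Hn(\partial^*E\cap\partial^*\Omega)$; so \eqref{local_isop} is presumably intended with $\Gamma$ in place of $\partial^*\Omega$, and under that reading your argument is sound.
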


To prove the necessity of  \eqref{local_ineq} it  suffices to insert
$f = \chi_E$ in \eqref{local_ineq}. The sufficiency can be derive from Theorem
\ref{summability} with the help of a partition of unity.
\medskip

Theorem 4 in \cite{BM} (or, that is the same, Theorem   6.5.2(1) in \cite{M})
can be naturally generalized to the case of  regions with countably rectifiable boundary
in the following form.
\begin{theorem}
\label{isoper_thm_2}
Let the boundary of a region $\Omega$ be a countably rectifiable set.
Then the inequality
\begin{equation}
\label{isoperOmega}
\inf_{c}
\left\{
\tracenorm{f-c}
\right\}
\leq
k\|f\|_{\BVO}
\end{equation}
is satisfied
with a constant  $k$ independent on  $f\in \BVO$
if and only if  the inequality
\begin{equation}
\label{summarized}
\min\big\{
\Hn(
\Gamma\cap\D^* E
),\,\,
\Hn(
\Gamma\cap\D^* (\Omega\setminus E)\}
\leq
k P_\Omega(E)
\end{equation}
holds for each set  $E\subset\Omega$ having the finite perimeter.
\end{theorem}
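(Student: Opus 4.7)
I would treat the two implications separately, with necessity being essentially a computation and sufficiency the only substantive step.

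For necessity, I would substitute $f=\chi_E$ into (\ref{isoperOmega}). The level sets $E_t^{\chi_E}$ equal $\Omega$ for $t<0$, $E$ for $t\in[0,1)$, and $\emptyset$ for $t\geq 1$, so that $\trU{\chi_E}=\chi_{\capU E}$ on $\Gamma$ and $\trD{\chi_E}=\chi_{\capD E}$ on $\capD\Omega$. Combined with (\ref{Dstar}) applied at $A=\Gamma$ (giving $\Gamma\cap\D^*E=\capU E\setminus\capD E$) and the definition (\ref{tracenorm}), this yields $\tracenorm{\chi_E}=\Hn(\Gamma\cap\D^*E)$. The negation invariance of $\tracenorm{\cdot}$, immediate from Lemma \ref{Lem_trace_norm}, gives $\tracenorm{\chi_E-1}=\tracenorm{\chi_{\Omega\setminus E}}=\Hn(\Gamma\cap\D^*(\Omega\setminus E))$. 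For $c\in[0,1]$, writing $\chi_E-c=(1-c)\chi_E-c\chi_{\Omega\setminus E}$ as a positive minus a negative part and again invoking Lemma \ref{Lem_trace_norm}, the map $c\mapsto\tracenorm{\chi_E-c}$ interpolates linearly between its values at $c=0$ and $c=1$; outside $[0,1]$ it only grows, with slope $\pm\Hn(\D^*\Omega)$. Hence $\inf_c\tracenorm{\chi_E-c}$ equals the min of those two endpoint values, and, since $\|\chi_E\|_{\BVO}=P_\Omega(E)$, (\ref{isoperOmega}) forces (\ref{summarized}).

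For sufficiency, given $f\in\BVO$ I would write $f-c=(f-c)^+-(f-c)^-$ and use Lemma \ref{Lem_trace_norm} to split $\tracenorm{f-c}=\tracenorm{(f-c)^+}+\tracenorm{(f-c)^-}$. Applying the integral formula (\ref{one_int_form_eta=1}) to each nonnegative piece and substituting $s=c+t$, respectively $s=c-t$, and using that $\{f<c-t\}$ differs from $\Omega\setminus E_{c-t}$ by a $\mu$-null set for a.e. $t$, produces
\[
\tracenorm{f-c}=\int_c^{\infty}\varphi(s)\,ds+\int_{-\infty}^c\psi(s)\,ds,
\]
where $E_s=\{f>s\}$, $\varphi(s)=\Hn(\Gamma\cap\D^*E_s)$, and $\psi(s)=\Hn(\Gamma\cap\D^*(\Omega\setminus E_s))$. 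Hypothesis (\ref{summarized}) reads exactly $\min(\varphi,\psi)(s)\leq kP_\Omega(E_s)$. The plan is to choose $c^*$ so that $\varphi\leq\psi$ a.e.\ on $(c^*,\infty)$ and $\psi\leq\varphi$ a.e.\ on $(-\infty,c^*)$; both integrands then collapse to $\min(\varphi,\psi)$, and the Fleming--Rishel formula (\ref{Flem-Rish_formula_0}) closes the estimate at $\tracenorm{f-c^*}\leq k\|f\|_{\BVO}$, from which (\ref{isoperOmega}) follows by taking the infimum over $c$.

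The hard part will be justifying the existence of such a cutoff $c^*$. Since $\varphi(s)\to\Hn(\D^*\Omega)$ and $\psi(s)\to0$ as $s\to-\infty$, with the roles reversed at $+\infty$, the map $F(c):=\int_c^\infty\varphi+\int_{-\infty}^c\psi$ blows up at both ends and attains its infimum at some finite $c^*$ obeying the first-order condition $\varphi(c^*)=\psi(c^*)$. The required sign pattern of $\varphi-\psi$ around $c^*$ is not automatic, since neither $\varphi$ nor $\psi$ is a priori monotone in $s$; I would set $c^*:=\inf\{c:\varphi(s)\leq\psi(s)\text{ for a.e. }s>c\}$ and verify by a rearrangement/swapping argument (parallel to Theorem 6.5.2/1 in \cite{M}) that the residual $F(c^*)-\int_\R\min(\varphi,\psi)\,ds$ is nonpositive, exploiting the one-sided density dichotomy of Lemma \ref{o_one} to reduce $\varphi$ and $\psi$ to a common measurable partition of $\Gamma$ determined by the level-set data. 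This analytic step is the only ingredient not already furnished by the preceding machinery.
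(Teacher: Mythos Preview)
Your overall plan matches the paper's, and your treatment of necessity is essentially the same computation reorganized. The issue is in sufficiency: what you flag as the ``hard part'' is in fact the place where the paper's proof is simplest, and your proposed rearrangement/swapping argument is both unnecessary and left vague.

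The observation you are missing is that $\varphi-\psi$ \emph{is} monotone. Using the decomposition (cf.\ \eqref{Dstar} and \eqref{DstarOmega})
\[
\varphi(s)=\Hn(\D^*\Omega\cap\capU E_s)+\Hn(\capD\Omega\cap\D^* E_s),\qquad
\psi(s)=\Hn(\D^*\Omega\setminus\capU E_s)+\Hn(\capD\Omega\cap\D^* E_s),
\]
the second summand is common to both and cancels in the difference, leaving
\[
\varphi(s)-\psi(s)=2\,\Hn(\D^*\Omega\cap\capU E_s)-\Hn(\D^*\Omega).
\]
Since $s\mapsto E_s$ is decreasing as a family of sets, $s\mapsto\capU E_s$ is decreasing as well, so $\varphi-\psi$ is nonincreasing. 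This gives immediately a single cutoff
\[
t_0=\sup\{t:\ \Hn(\D^*\Omega\cap\capU E_t)\ge \Hn(\D^*\Omega\setminus\capU E_t)\}
\]
with $\varphi\le\psi$ for $s>t_0$ and $\psi\le\varphi$ for $s<t_0$; your integrands then collapse to $\min(\varphi,\psi)$ exactly as you wanted, and Fleming--Rishel closes the estimate. Your statement that ``neither $\varphi$ nor $\psi$ is a priori monotone'' is true (the $\capD\Omega\cap\D^*E_s$ term need not be), but it is irrelevant because only their difference matters for locating the cutoff. Once you add this observation, your sufficiency argument is complete and coincides with the paper's; the appeal to a rearrangement argument ``parallel to Theorem 6.5.2/1 in \cite{M}'' and to Lemma~\ref{o_one} should be dropped.
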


\begin{proof}
First note (cf. \eqref{Dstar}) that
\begin{align}
\label{5.3}
\Hn(\Gamma\cap\D^* E)&=
\Hn(\D^*\Omega\cap\capU E)+
\Hn(\capD\Omega\cap\D^* E),\\
\label{5.4}
\Hn(\Gamma\cap\D^*(\Omega\setminus E))&=
\Hn(\D^*\Omega\setminus\capU E)+
\Hn(\capD\Omega\cap\D^* E).
\end{align}
\textbf{Necessity.}
Let $E\subset\Omega$, $P_\Omega(E)<\infty$.
For the characteristic function $\chi_E$ of the set
 $E$ we have
\begin{align*}
k &P_\Omega(E)=
k\|\chi_E\|_{\BVO}\\
&\geq
\inf_{c}\big\{
\int_{\D^*\Omega}|\trU{(\chi_E)}(x)-c|\,d\Hn(x)
+
\int_{\capD\Omega}\big(\trU{(\chi_E)}(x)-\trD{(\chi_E)}(x)\big)\,d\Hn(x)
\big\}\\
&=\min_c \{|1-c|\Hn(\D^*\Omega\cap\capU E)
+|c|\Hn(\D^*\Omega\setminus\capU E)
+\Hn(\capD\Omega\cap\D^* E)\big\}\\
&=\min \big\{
\Hn(\D^*\Omega\cap\capU E),\,\,
\Hn(\D^*\Omega\setminus\capU E)
\big\})
+\Hn(\capD\Omega\cap\D^* E)\big\}.
\end{align*}
Jointly with
\eqref{5.3} and \eqref{5.4} this proves the inequality  \eqref{summarized}.

\textbf{Sufficiency.} If $\|f\|_{\BVO}<\infty$ then $P(E_t)<\infty$
for almost all $t$.
Taking into account \eqref{summarized}--\eqref{5.4}, by the Fleming--Rishel formula
\eqref{Flem-Rish_formula_0} we get
\begin{equation}
\begin{split}
\label{Only_if_1}
k&\|f\|_{BV(\Omega)}
=
k\int_{-\infty}^{+\infty}P_\Omega(E_t)\,dt\\
&\geq
\int_{-\infty}^{+\infty}
\Big(
\min \big\{
\Hn(\D^*\Omega\cap\capU E_t),
\Hn(\D^*\Omega\setminus\capU E_t)
\big\}
+\Hn(\capD\Omega\cap\D^* E_t)\Big)
\,dt.
\end{split}
\end{equation}

Denote
$t_0=\sup\big\{t\mid
\Hn(\D^*\Omega\cap\capU E_t)
\geq
\Hn(\D^*\Omega\setminus\capU E_t)
)\big\}$
and observe that
\mbox{$\Hn(\D^*\Omega\cap\capU E_t)$}
does not increase in $t$ and
$\Hn(\D^*\Omega\setminus\capU E_t)$ does not
decrease in $t$. Hence, by
\eqref{one_int_form_eta=1} we obtain
\begin{equation*}
\begin{split}
k\|f\|_{BV(\Omega)}
\geq&
\int_{t_0}^{+\infty}
\Hn(\Gamma\cap\D^* E_t)
\,dt
+\int_{-\infty}^{t_0}
\Hn(\Gamma\cap\D^*(\Omega\setminus E_t))
\,dt\\
=&
\tracenorm{(f-c)^+}+
\tracenorm{(f-c)^-}=
\tracenorm{f-c}.
\end{split}
\end{equation*}
So  \eqref{summarized} holds and the theorem is proved.
\end{proof}

\section{Extension of a function in $BV(\Omega)$ to all the space by a constant}

In this section we suppose everywhere that $P(\Omega)< \infty$ and $\D\Omega$
is a countably rectifiable set.

Let a function $f$ be defined in a region $\Omega\subset \Rn$. Denote by $f_c$ the
function
$f\colon\Rn\to \mathbb{R}$, defined by the condition
 $f_c(x)=f(x)$ for $x \in \Omega$ and $f_c(x)=c$ for $x \notin \Omega$,
where  $c$ is a constant.
\begin{lemma}
\label{lem-const-out}
The following equality
\begin{equation}
\begin{split}
\label{eq6.5.5/1}
\|f_c\|_{BV(\mathbb{R}^n)}=\|f\|_{BV(\Omega)}
+
\tracenorm{f-c}
\end{split}
\end{equation}
holds.
\end{lemma}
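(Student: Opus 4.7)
The plan is to use the Fleming--Rishel coarea formula \eqref{Flem-Rish_formula_0} on $\mathbb{R}^n$ to express $\|f_c\|_{BV(\mathbb{R}^n)}$ as an integral of perimeters of superlevel sets of $f_c$, identify those sets in terms of $E_t=\{x\in\Omega\mid f(x)>t\}$, split each perimeter into a bulk contribution inside $\Omega$ and a contribution on $\Gamma$, and finally recognize the two contributions as $\|f\|_{BV(\Omega)}$ and $\tracenorm{f-c}$ respectively.

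First I would identify the superlevel sets of $f_c$. For $t>c$, $\{f_c>t\}=E_t\subset\Omega$. For $t<c$, $\{f_c>t\}=E_t\cup(\mathbb{R}^n\setminus\Omega)$, whose complement is $\Omega\setminus E_t$, so $P(\{f_c>t\})=P(\Omega\setminus E_t)$. The single value $t=c$ contributes a null set to the integral.

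The key step is the decomposition
\begin{equation*}
P(F)=P_\Omega(F)+\mu(\Gamma\cap\D^*F)
\end{equation*}
valid for any measurable $F\subset\Omega$ with $P(F)<\infty$. Up to $\mu$-null sets, $\D^*F=(\D^*F\cap\Omega)\sqcup(\D^*F\cap\D\Omega)$, and the first piece has $\mu$-measure $P_\Omega(F)$ by definition. Since $F\subset\Omega$, at every $x\in\D^*F$ the density-one side of $F$ is also a density-one side of $\Omega$, so $x\in\Gamma$; hence $\D^*F\cap\D\Omega\subset\Gamma$ up to $\mu$-null sets, which gives the identity. Applying this to $F=E_t$ for $t>c$ and to $F=\Omega\setminus E_t$ for $t<c$ (and using $P_\Omega(\Omega\setminus E_t)=P_\Omega(E_t)$), Fleming--Rishel yields
\begin{equation*}
\|f_c\|_{BV(\mathbb{R}^n)}
=\int_{-\infty}^{+\infty}P_\Omega(E_t)\,dt
+\int_{c}^{+\infty}\mu(\Gamma\cap\D^*E_t)\,dt
+\int_{-\infty}^{c}\mu(\Gamma\cap\D^*(\Omega\setminus E_t))\,dt.
\end{equation*}

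The first integral equals $\|f\|_{BV(\Omega)}$ by \eqref{Flem-Rish_formula_0}. For the remaining two, I note that the superlevel sets of the nonnegative functions $(f-c)^+$ and $(f-c)^-$ (at positive level $s$) are $E_{c+s}$ and $\Omega\setminus E_{c-s}$ respectively. Corollary \eqref{one_int_form_eta=1} applied to each then gives $\int_c^{+\infty}\mu(\Gamma\cap\D^*E_t)\,dt=\tracenorm{(f-c)^+}$ and $\int_{-\infty}^{c}\mu(\Gamma\cap\D^*(\Omega\setminus E_t))\,dt=\tracenorm{(f-c)^-}$. Adding and using Lemma \ref{Lem_trace_norm} (applied to $f-c$) to combine the two traces into $\tracenorm{f-c}$ finishes the proof. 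The main obstacle is the decomposition $P(F)=P_\Omega(F)+\mu(\Gamma\cap\D^*F)$ for $F\subset\Omega$: the non-trivial point is the inclusion $\D^*F\cap\D\Omega\subset\Gamma$ modulo $\mu$-null sets, which rests on comparing one-sided densities of $F$ and $\Omega$ at reduced-boundary points.
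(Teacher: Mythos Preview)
Your argument is correct and follows essentially the same route as the paper's proof: Fleming--Rishel on $\mathbb{R}^n$, the perimeter decomposition $P(F)=P_\Omega(F)+\mu(\Gamma\cap\partial^*F)$ for $F\subset\Omega$, and the identity \eqref{one_int_form_eta=1}. The only organizational difference is that the paper first replaces $f$ by $f-c$ to reduce to $c=0$ and then invokes \eqref{form_norm_plus_minus} to reduce to $f\ge 0$, so that only the range $t>0$ appears and no splitting at $t=c$ is needed; you instead treat general $c$ directly and recover $\tracenorm{(f-c)^+}$ and $\tracenorm{(f-c)^-}$ from the two half-lines before recombining via Lemma~\ref{Lem_trace_norm}.
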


\begin{proof}
Without lost of generality we can assume $c=0$; indeed, it  is enough to consider
$f-c$ instead of $f$.
The equality
\eqref{form_norm_plus_minus}
allows to assume that  $f\geq0$.
As usual we set $E_t=\{x\in\Omega\mid f_0>t\}$.
Now by the equalities \eqref{Flem-Rish_formula_0} and  \eqref{one_int_form_eta=1} we have
\begin{equation*}
\label{eq6.5.5/2}
\begin{split}
\|f_0\|_{BV(\mathbb{R}^n)}&=
\int^{+\infty}_0 P(\{x\in\Rn\mid f_0>t\})\, dt\\
&=\int^{+\infty}_0
\Big(
P_{\Omega} (E_t)
+
\Hn(\Gamma\cap\D^*E_t)
\Big)
\, dt\\
&=\|f\|_{BV(\Omega)}
+
\tracenorm{f}.
\end{split}
\end{equation*}
\end{proof}

The question  can arise: if it is possible to enlarge $\Omega$ by removing $\D^2_\Gamma \Omega$
and thus to reduce our case to one when  normals in the sense of
Federer exist
%$\mu$-
almost everywhere on $\D\Omega$. Sometimes it is possible.
For instance, let
 $\Omega=D^2\setminus\cup_{i=1}^\infty I_i$ be
the disk with a sequence of intervals removed in such a way, that the sum of
lengths of $I_i$ is finite.
Then every $f\in\BVO$
such that
$$\int_{\cup_{i=1}^\infty I_i}(\trU{f}-\trD{f})<\infty$$
can be extended to
  a function $\tilde f\in BV(D^2)$.
 Unfortunately a slightly more complicated example shows that this is not
 necessary the case.

 \begin{example}
Denote by $K\subset[0,1]$ a Cantor set of positive length.
Define the region $\Omega$ as follows:
\begin{equation}
\Omega=
B_{(0,0)}(2)
\setminus
\{
(x,y)
\mid
x\in[0,1],\,\,
|y|\leq(\dist(x,K))^2
\}.
\end{equation}
It is not difficult to see that both of the one-sided densities equal one at all
points of the set
  $K\times\{0\}$ and  $\D^2_\Gamma\Omega$ is just the set of these points.
Nevertheless it is impossible to enlarge  $\Omega$  so that to include this set
in the region.
\end{example}

\section{Embedding theorems}

The following theorem is a direct generalization of Theorem
6.5.7/1 in \cite{M}.
\begin{theorem}
\label{embedding_thm}
Suppose that  $\D\Omega$ is a countably $\Hn$-rectifiable set.
Then for every  function $f\in \BVO$ the inequality
\begin{equation}
\label{embedding}
\left[\int_\Omega f^{\frac{n}{n-1}}\,dx\right]^{\frac{n-1}{n}}
\leq
n c_n^{-\frac{1}{n}}
\left\{
\|f\|_\BVO
+
\tracenorm{f}
%\int_{\D^*\Omega} |\trU{f}|\,d\Hn
%+\int_{\capD \Omega} (\trU{f}- \trD{f})\,d\Hn
\right\}
\end{equation}
holds and the constant
$n c_n^{-\frac{1}{n}}$ is exact.
\end{theorem}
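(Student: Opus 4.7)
The plan is to reduce inequality \eqref{embedding} to the classical sharp Gagliardo--Nirenberg--Sobolev inequality in $\Rn$, using Lemma \ref{lem-const-out} to absorb the trace norm into the $BV$-norm of an extension of $f$ to all of $\Rn$.

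As a first simplification, I would reduce to the case $f\geq 0$. Writing $f=f^+-f^-$ with disjoint supports gives $\int_\Omega |f|^{n/(n-1)}\,dx=\int_\Omega (f^+)^{n/(n-1)}\,dx+\int_\Omega (f^-)^{n/(n-1)}\,dx$, and combining the elementary subadditivity $(a+b)^{(n-1)/n}\leq a^{(n-1)/n}+b^{(n-1)/n}$ for $a,b\geq 0$ with the decomposition $\|f\|_{BV(\Omega)}=\|f^+\|_{BV(\Omega)}+\|f^-\|_{BV(\Omega)}$ (an easy consequence of \eqref{Flem-Rish_formula_0}) and $\tracenorm{f}=\tracenorm{f^+}+\tracenorm{f^-}$ (Lemma \ref{Lem_trace_norm}) reduces everything to the nonnegative case.

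For $f\geq 0$, let $f_0:\Rn\to\R$ extend $f$ by zero outside $\Omega$. Lemma \ref{lem-const-out} applied with $c=0$ gives
\[
\|f_0\|_{BV(\Rn)}=\|f\|_{BV(\Omega)}+\tracenorm{f},
\]
while trivially $\int_{\Rn} f_0^{n/(n-1)}\,dx=\int_\Omega f^{n/(n-1)}\,dx$. Hence \eqref{embedding} reduces to the classical sharp bound $\|f_0\|_{L^{n/(n-1)}(\Rn)}\leq nc_n^{-1/n}\|f_0\|_{BV(\Rn)}$, which is proved in the usual three lines: the layer-cake representation $f_0=\int_0^\infty \chi_{E_t}\,dt$ (with $E_t=\{f_0>t\}$) combined with Minkowski's integral inequality in $L^{n/(n-1)}$ yields $\|f_0\|_{L^{n/(n-1)}}\leq \int_0^\infty \V(E_t)^{(n-1)/n}\,dt$, and the sharp isoperimetric inequality $\V(E)^{(n-1)/n}\leq nc_n^{-1/n}P(E)$ in $\Rn$ together with Fleming--Rishel \eqref{Flem-Rish_formula_0} applied to $f_0$ then gives the bound.

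Exactness of $nc_n^{-1/n}$ is verified by testing on $f=\chi_B$ for a ball $B$ compactly contained in $\Omega$: then $\tracenorm{\chi_B}=0$, $\|\chi_B\|_{BV(\Omega)}=P(B)$, and \eqref{embedding} reduces to the sharp isoperimetric inequality for $B$, which is attained. The only delicate point in this plan is the applicability of Lemma \ref{lem-const-out}, stated under the hypothesis $P(\Omega)<\infty$; when the right-hand side of \eqref{embedding} is infinite there is nothing to prove, and otherwise the finiteness of $\tracenorm{f}$ together with countable rectifiability of $\partial\Omega$ makes the extension-by-zero step go through.
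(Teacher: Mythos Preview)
Your proof is correct and is essentially the paper's argument repackaged: the paper layer-cakes $f$, applies the isoperimetric inequality to each $E_t$ in $\Rn$, splits $P_{\Rn}(E_t)=P_\Omega(E_t)+\mu(\Gamma\cap\partial^*E_t)$, and integrates via \eqref{Flem-Rish_formula_0} and \eqref{one_int_form_eta=1}---precisely the content of Lemma~\ref{lem-const-out} combined with the classical Sobolev inequality you invoke. Your explicit sharpness check on $\chi_B$ and your remark on the standing hypothesis $P(\Omega)<\infty$ in Lemma~\ref{lem-const-out} are additions the paper's proof leaves implicit.
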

\begin{proof} By Corollary \ref{cor_pos_neg} and Lemma \ref{lem_sum_trace}
we can suppose that
$f\ge0$.
Just as in Theorem 7 in \cite{BM}, we get
\begin{equation}
\left[\int_\Omega|f|^{\frac{n}{n-1}}\,dx\right]^{\frac{n-1}{n}}
\leq
\int_0^{+\infty} H_n(E_t)^\frac{n-1}{n}\,dt,
\end{equation}
where as usual $E_t=\{x\in\Omega \mid f(x)>t\}$.

It follows from the isoperimetric inequality that
\begin{equation}
\begin{split}
\label{intermed_eqv}
H_n( E_t)^\frac{n-1}{n}
\leq&
n c_n^{-\frac{1}{n}}P_{\Rn}(E_t)\\
=&
n c_n^{-\frac{1}{n}}\left[P_\Omega(E_t)+
\Hn(\Gamma\cap\D^* (E_t))\right].
\end{split}
\end{equation}

Now the equations
 \eqref{intermed_eqv} and \eqref{one_int_form_eta=1} imply
\begin{equation*}
\begin{split}
n^{-1}c_n^{\frac1n}\left[\int_\Omega|f|^{\frac{n}{n-1}}\,dx\right]^{\frac{n-1}{n}}
&\leq
\int_{-\infty}^{+\infty}P_\Omega(E_t)\,dt
+\int_0^{+\infty} \Hn(\Gamma\cap\D^* (E_t))\,dt
\\
&=\|f\|_\BVO
+
\tracenorm{f}.
%\int_{\D^*\Omega} \trU{f}\,d\Hn
%+\int_{\capD \Omega} (\trU{f}- \trD{f})\,d\Hn.
\end{split}
\end{equation*}
\end{proof}

Note that the multiplicative inequality 6.5.6 in \cite{M}
can also be generalized to our case.

%Хорошо бы здесь упомянуть о мультипликативных нер-вах 6.5.6 из \cite{M}

\section{The Gauss--Ostrogradskiy formula}
\begin{theorem}[The Gauss--Ostrogradskiy formula] Let the boundary of a
region $\Gamma$ is a countably
$\Hn$-rectifiable set. Assume that
$\D\Omega$ is equipped with a standard field $\nu$ of unit normals and
the trace of a function $f\in\BVO$ is summable. Then
\begin{equation}
\label{GaussGreen}
\nabla f(\Omega)=
\int_{\D^*\Omega}
\trN{f}{\nu}(x)\nu(x)\,d\Hn(x)
+
\int_{\capD \Omega}
\big(\trN{f}{\nu}(x)-\trN{f}{-\nu}(x)\big)\nu(x)\,d\Hn(x).
\end{equation}

\end{theorem}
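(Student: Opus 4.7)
The plan is to reduce to $f\ge 0$ and then apply the Fleming--Rishel formula \eqref{Flem-Rish_formula} to express $\nabla f(\Omega)$ as a $t$-average of vector measures $\nabla\chi_{E_t}(\Omega)$, each of which I can rewrite as a boundary integral of Federer normals of $E_t$ along the piece $\Gamma$ of $\D\Omega$ carrying all the density.

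First I would split $f=f^+-f^-$. By linearity of the gradient together with Corollary~\ref{cor_pos_neg} applied to both $\nu$ and $-\nu$, it suffices to prove \eqref{GaussGreen} for a nonnegative $f$. Assuming $f\ge 0$, \eqref{Flem-Rish_formula} gives
$$\nabla f(\Omega)=\int_0^{+\infty}\nabla\chi_{E_t}(\Omega)\,dt,$$
and for a.e.\ $t$ the set $E_t\subset\Omega$ has finite perimeter, so $\chi_{E_t}$ is a compactly supported $BV$-function with $\nabla\chi_{E_t}(\Rn)=0$. In addition, $\D^*E_t\cap(\D\Omega\setminus\Gamma)$ is $\mu$-null: at any such point the Federer condition $\Theta^{-\nu_F^{E_t}}_{E_t}=1$, combined with $E_t\subset\Omega$, would force $\Theta^{-\nu_F^{E_t}}_\Omega=1$, placing the point back in $\Gamma$. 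Together with \eqref{integr_normal} applied to $E_t$, these two observations produce
$$\nabla\chi_{E_t}(\Omega)=\int_{\Gamma\cap\D^*E_t}\nu_F^{E_t}\,d\mu.$$

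The heart of the argument is identifying $\nu_F^{E_t}$ on $\Gamma$. At a regular point $x\in\Gamma\cap\D^*E_t$ the vector $-\nu_F^{E_t}(x)$ is a direction of $\Omega$-density one, hence equals $\nu(x)$ or $-\nu(x)$. Combining this with Lemma~\ref{o_one} (each $\Theta^{\pm\nu}_{E_t}\in\{0,1\}$) and the fact that $x\in\D^*E_t$ excludes both being $1$, one finds $\nu_F^{E_t}=-\nu$ on $\D_\Gamma^\nu E_t\setminus\D_\Gamma^{-\nu}E_t$ and $\nu_F^{E_t}=+\nu$ on $\D_\Gamma^{-\nu}E_t\setminus\D_\Gamma^\nu E_t$. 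Splitting $\Gamma=\D^*\Omega\cup\D_\Gamma^2\Omega$, using that $\D_\Gamma^{-\nu}E_t\cap\D^*\Omega=\emptyset$ (since $\Theta^{-\nu}_\Omega=0$ on $\D^*\Omega$), and invoking the identity $\chi_{A\setminus B}-\chi_{B\setminus A}=\chi_A-\chi_B$, the inner integrand rearranges into signed combinations of $\chi_{\D_\Gamma^{\pm\nu}E_t}\nu$ supported on $\D^*\Omega$ and $\D_\Gamma^2\Omega$.

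Finally I would integrate in $t\in(0,+\infty)$, swap order by Fubini, and apply Lemma~\ref{Main_trace}, which for all but countably many $t$ identifies $\D_\Gamma^{\pm\nu}E_t$ with $\{x\in\D_\Gamma^{\pm\nu}\Omega\mid f^{\pm\nu}(x)\ge t\}$ up to $\mu$-null sets. Since $f\ge 0$ forces $f^{\pm\nu}\ge 0$, we have $\int_0^{+\infty}\chi_{\{f^{\pm\nu}\ge t\}}\,dt=f^{\pm\nu}$, and the signed contributions reassemble into an integral $\int_{\D^*\Omega}f^\nu\nu\,d\mu$ on the reduced boundary plus $\int_{\D_\Gamma^2\Omega}(f^\nu-f^{-\nu})\nu\,d\mu$ on the doubly-dense part, yielding \eqref{GaussGreen} for $f\ge 0$; the case of a general $f$ then follows from the decomposition $f=f^+-f^-$. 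I expect the main technical obstacle to be the sign bookkeeping in the case analysis for $\nu_F^{E_t}$---partitioning $\Gamma\cap\D^*E_t$ into the two pieces where $\nu_F^{E_t}=\pm\nu$ and verifying that after integration in $t$ these pieces reassemble correctly into the $f^\nu$ and $f^\nu-f^{-\nu}$ terms on $\D^*\Omega$ and $\D_\Gamma^2\Omega$ respectively; the $\mu$-negligibility of $\D^*E_t\cap(\D\Omega\setminus\Gamma)$ is a secondary but still necessary point.
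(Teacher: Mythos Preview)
Your proposal is correct and follows essentially the same route as the paper: reduce to $f\ge 0$, apply the Fleming--Rishel formula \eqref{Flem-Rish_formula} together with $\nabla\chi_{E_t}(\Rn)=0$ to pass to boundary integrals over $\Gamma\cap\D^*E_t$, identify $\nu_F^{E_t}$ with $\pm\nu$ there, and then integrate in $t$ using Lemma~\ref{Main_trace}.

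The only stylistic difference is in how the sign bookkeeping is handled. The paper observes that the right-hand side of \eqref{GaussGreen} is independent of the choice of measurable normal field, and therefore re-chooses $\nu$ on $\capD\Omega$ (in an $f$-dependent way) so that $-\nu$ coincides with the Federer normal of $E_t$ for all relevant $t$; this eliminates the case split and allows a direct appeal to the pre-packaged identity \eqref{one_int_form} with $\eta=\nu$. You instead keep the original standard $\nu$ and carry out the dichotomy $\nu_F^{E_t}=\pm\nu$ explicitly, reassembling via Lemma~\ref{Main_trace} and the layer-cake formula. Your version is slightly more laborious but avoids having to justify measurability of the re-chosen field; the paper's version is shorter once \eqref{one_int_form} is available. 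One small point: your justification that $\nu_F^{E_t}\in\{\pm\nu\}$ (``a direction of $\Omega$-density one, hence equals $\nu$ or $-\nu$'') is cleanest if phrased directly through Corollary~\ref{Omega_0_1} and \eqref{Dstar}, which give $\Gamma\cap\D^*E_t=(\D_\Gamma^\nu E_t\setminus\D_\Gamma^{-\nu}E_t)\cup(\D_\Gamma^{-\nu}E_t\setminus\D_\Gamma^\nu E_t)$ and immediately identify $\nu_F^{E_t}$ on each piece.
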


\begin{proof}
It suffices to prove
\eqref{GaussGreen} only  for nonnegative functions
$f$. Indeed, to prove the theorem in the general case it suffices
to apply
 \eqref{GaussGreen} to $f^{+}$ and $f^{-}$ and then to use Corollary  \ref{cor_pos_neg}.

Obviously the right part of \eqref{GaussGreen} does not depend on a choice of
$\nu$. Note that if   $\trU{f}(x)\neq \trD{f}(x)$ then the normal to $E_t$
in the sense of Federer  at $x$ exists for all
$t\in (\trD{f}(x), \, \trU{f}(x))$ and does not depend on $t$.
Therefore we can suppose that at each such point $x$ the normal
$-\nu(x)$ coincides with the normal to $E_t$ in the sense of Federer for
$\trD{f}(x)<t<\trU{f}(x)$.
If we choose normals  $\nu$ in such a way, the formula
\eqref{GaussGreen} can be rewriten in the following form:
\begin{equation}
\label{GaussGreen_2}
\nabla f(\Omega)=
\int_{\D^*\Omega}
\trU{f}(x)\nu(x)\,d\Hn(x)
+
\int_{\capD \Omega}
\big(\trD{f}(x)-\trU{f}(x)\big)\nu(x)\,d\Hn(x).
\end{equation}

Obviously, if $P(E)<\infty$ then
$\nabla\chi_E(\Rn)=0$. By applying \eqref{Flem-Rish_formula} to the left part
of \eqref{GaussGreen_2} we obtain
\begin{equation*}
\begin{split}
\nabla f(\Omega)&=
\int_{0}^{\infty}\nabla\chi_{E_t}(\Omega)\,dt\\
&=-\int_{0}^{\infty}\nabla\chi_{E_t}(\Rn\setminus\Omega)\,dt
=-\int_{0}^{\infty}\nabla\chi_{E_t}(\Gamma\cap\D^*E_t)\,dt.
\end{split}
\end{equation*}
From the other hand, by \eqref{integr_normal} we get
$$
\nabla\chi_{E_t}(\Gamma\cap\D^*E_t)=
-\int_{\Gamma\cap\D^*E_t}\nu_{E_t}(x)\,d\Hn(x)=
-\int_{\Gamma\cap\D^*E_t}\nu(x)\,d\Hn(x),
$$
where $\nu_{E_t}$ is the normal to $E_t$ in the sense of Federer. Here the first equality
follows from the fact that
 $\nu_{E_t}(x)=\nu(x)$ for almost all $x\in\Gamma\cap\D^*E_t$,
and the latter equality is true since
 $\Hn(E_t\setminus\cup_{\tau>t} E_\tau)=0$
for almost all $t\in\R$.

Therefore, applying   \eqref{one_int_form} for $\eta=\nu$ we obtain
 \begin{equation*}
\begin{split}
\nabla f(\Omega)=&-\int_{0}^{+\infty}\nabla\chi_{E_t}(\Gamma\cap\D^*E_t)\,dt\\
=&\int_{0}^{+\infty}\int_{\Gamma\cap\D^*{E_t}}\nu(x)\,d\Hn(x)\\
=&\int_\Gamma \trU{f}(x)\nu(x)\,d\Hn(x)
+\int_{\capD\Omega}(\trU{f}(x)-\trD{f}(x))\nu(x)\,d\Hn(x).
\end{split}
\end{equation*}
The theorem is proved.
\end{proof}

\section{Average trace of a function in  $BV(\Omega)$}
Let $\Omega$ be a region with the countably rectifiable boundary $\D\Omega$.
Suppose that a function $f\in BV(\Omega)$ is summable in some neighborhood of a point
$x\in \Gamma$. Let us define the upper and lower average traces
of the function $f$ at  $x$ with respect to a normal  $\nu$ by equations:

\begin{align*}
\overline {f}(x,\nu)&=\limsup_{r\to 0}\, 2v_n^{-1}r^{-n}
\int_{B_r^\nu (x)} f(y)\,dy,\\
\underline {f}(x,\nu)&=\liminf_{r\to 0}\, 2v_n^{-1}r^{-n}
\int_{B_r^\nu (x)} f(x)\,dy.
\end{align*}

If $\overline {f}(x,\nu)=\underline {f}(x,\nu)$ then their common value
is called {\em average trace} and denoted
$\widetilde {f}(x,\nu)$.
First we prove some properties of average traces for nonnegative functions.

\begin{lemma}
\label{trace-lemma}
Suppose that a function $f\in BV(\Omega)$ is nonnegative and locally summable.
Then
$\underline f(x,\nu)\geq \trN{f}{\nu}(x)$.
\end{lemma}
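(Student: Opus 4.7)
The plan is to compare the half-ball average of $f$ directly with the characteristic functions of the super-level sets $E_t = \{f > t\}$, using the definition $f^\nu(x) = \sup\{t : x \in \partial_A^\nu E_t\} = \sup\{t : \Theta_{E_t}^\nu(x) = 1\}$.

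First I would dispose of the trivial cases. If $f^\nu(x) \leq 0$, then because $f \geq 0$ one has $\underline{f}(x,\nu) \geq 0 \geq f^\nu(x)$ automatically, so nothing is to be shown. (In particular, the case $f^\nu(x) = -\infty$ is handled.) Assume therefore that $f^\nu(x) > 0$.

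Next, I would fix an arbitrary $t$ with $0 < t < f^\nu(x)$ and $P(E_t) < \infty$; by the definition of $f^\nu(x)$ as a supremum (restricted, as the paper allows, to $t$ with finite perimeter), such $t$ exist arbitrarily close to $f^\nu(x)$, and for each of them $\Theta_{E_t}^\nu(x) = 1$, i.e.
\begin{equation*}
\lim_{r \to 0} 2 v_n^{-1} r^{-n} H_n\bigl(B_r^\nu(x) \cap E_t\bigr) = 1.
\end{equation*}
Since $f \geq t$ on $E_t$ and $f \geq 0 \geq 0$ everywhere else, the pointwise inequality $f \geq t \chi_{E_t}$ holds on $\Omega$ (and trivially on any extension by $0$), so
\begin{equation*}
2 v_n^{-1} r^{-n} \int_{B_r^\nu(x)} f(y)\,dy \;\geq\; t \cdot 2 v_n^{-1} r^{-n} H_n\bigl(B_r^\nu(x) \cap E_t\bigr).
\end{equation*}
Passing to the $\liminf$ as $r \to 0$ on the left and the genuine limit on the right yields $\underline{f}(x,\nu) \geq t$.

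Finally, letting $t \uparrow f^\nu(x)$ along the admissible sequence of levels with $P(E_t) < \infty$ gives $\underline{f}(x,\nu) \geq f^\nu(x)$, completing the proof. There is no real obstacle here: the local summability of $f$ ensures that $\underline{f}(x,\nu)$ is well defined and finite, but the argument only uses monotonicity of the integral together with the single fact $\Theta_{E_t}^\nu(x) = 1$ for $t$ slightly below $f^\nu(x)$. The one point requiring a moment of care is the sign restriction $t > 0$ when turning $f \geq t \chi_{E_t}$ into a numerical lower bound, which is precisely why the nonnegativity hypothesis is used and why the case $f^\nu(x) \leq 0$ has to be treated separately at the start.
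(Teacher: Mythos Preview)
Your proof is correct and follows essentially the same approach as the paper: both use that $\Theta_{E_t}^\nu(x)=1$ for $0<t<f^\nu(x)$ to bound the half-ball average from below by $t$, then let $t\uparrow f^\nu(x)$. The only cosmetic difference is that the paper passes through the layer-cake formula $\int_{B_r^\nu(x)} f\,dy=\int_0^\infty \V(E_\tau\cap B_r^\nu(x))\,d\tau$ before truncating at level $t$, whereas you use the equivalent pointwise inequality $f\geq t\,\chi_{E_t}$ directly.
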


\begin{proof} (Compare with the proof of Lemma 6.6.2/1 in \cite{M}.)

Lemma \ref{trace-lemma} is obviously true if  $\trN{f}{\nu}(x)=0$.
Suppose $0<\trN{f}{\nu}(x)$.
Pick $\ep >0$ and choose a number $t$ such that
$0<t<\trN{f}{\nu}(x)$ and $P_\Omega(E_t)<\infty$. Then $x\in \capN{\nu}{E_t}$.
This means that $\Theta^\nu_E(x)=1$. Therefore there exists $r_0(x)>0$ such that
$$
1-\ep<2v_n^{-n}r^{-n}\V(E_t\cap B_r^\nu (x))\leq1
$$
for $0<r<r_0(x)$.
Since
$$
\int_{B_r^\nu (x)}f(y)\,dy=\int_o^\infty \V(E_\tau\cap B_r^\nu (x))\,d\tau ,
$$
we obtain
\begin{align*}
2v_n^{-n}r^{-n}\V(B_r^\nu (x))\int_{B_r^\nu (x)}f(y)\,dy
&\geq
2v_n^{-n}r^{-n}
\int_o^t \V(E_\tau\cap B_r^\nu (x))\,d\tau\\
&\geq
2v_n^{-n}r^{-n}\V(E_\tau\cap B_r^\nu (x))t
\geq
(1-\ep)t.
\end{align*}
Since  $\ep$ is arbitrary we finish the proof by passing to the limit
as $r\to 0$, and then by passing to the limit as
$t\to \trN{f}{\nu}(x)$.

\end{proof}
\bigskip

\begin{theorem}
\label{average tr=tr}
If $f\in BV(\Omega)$ and  $\tracenorm{f}<\infty$ then the average trace
$\tilde f(x,\nu)$ of the function $f$ exists and equals to the trace
$\trN{f}{\nu}(x)$ almost everywhere on $\capN{\nu}\Omega$.
\end{theorem}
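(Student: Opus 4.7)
The approach is first to reduce to nonnegative $f$, then to establish both $\underline f(x,\nu)\geq f^\nu(x)$ and $\overline f(x,\nu)\leq f^\nu(x)$ for $\mu$-a.e.\ $x\in\capN{\nu}\Omega$. For the reduction, write $f=f^+-f^-$: by Lemma~\ref{Lem_trace_norm} both $f^\pm$ have summable trace, by Corollary~\ref{cor_pos_neg} their traces are $(f^\nu)^\pm$, and since the half-ball average is linear in its integrand, it suffices to prove the theorem for $f\geq 0$.

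For bounded $0\leq f\leq M$, Lemma~\ref{trace-lemma} directly gives $\underline f(x,\nu)\geq f^\nu(x)$. Applying that same lemma to the nonnegative locally summable function $g=M-f$ (whose trace is $M-f^\nu$ by Lemma~\ref{lem_sum_trace} combined with Lemma~\ref{minus_minus} and the evident identity $M^\nu\equiv M$ on $\capN{\nu}\Omega$), and using the direct identity $\underline g(x,\nu)=M-\overline f(x,\nu)$, we obtain
\[
M-\overline f(x,\nu)=\underline g(x,\nu)\geq g^\nu(x)=M-f^\nu(x),
\]
hence $\overline f(x,\nu)\leq f^\nu(x)$ and, combined with the lower bound, $\widetilde f(x,\nu)=f^\nu(x)$ $\mu$-a.e.\ when $f$ is bounded.

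For unbounded $f\geq 0$, truncate $f_N=\min(f,N)$. Lemma~\ref{lem_composition} (applicable via Remark~\ref{Rem_BV-sim_diff}(2)) gives $f_N^\nu=\min(f^\nu,N)$, and since $\tracenorm f<\infty$ forces $f^\nu<\infty$ $\mu$-a.e., $f_N^\nu\to f^\nu$ pointwise $\mu$-a.e. The bounded case applied to $f_N$ combined with subadditivity of $\limsup$ on $f=f_N+(f-N)^+$ yields
\[
\overline f(x,\nu)\leq f^\nu(x)+\overline{(f-N)^+}(x,\nu),
\]
so, together with the lower bound from Lemma~\ref{trace-lemma}, the theorem reduces to showing $\overline{(f-N)^+}(x,\nu)\to 0$ as $N\to\infty$ $\mu$-a.e. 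This is the main technical obstacle. The Fleming--Rishel formula~\eqref{Flem-Rish_formula_0} and the identity~\eqref{one_int_form_eta=1} make both $\|(f-N)^+\|_{BV(\Omega)}=\int_N^\infty P_\Omega(E_t)\,dt$ and $\tracenorm{(f-N)^+}=\int_N^\infty\mu(\Gamma\cap\D^*E_t)\,dt$ vanish as $N\to\infty$, and by Theorem~\ref{embedding_thm} the functions $(f-N)^+$ then vanish in $L^{n/(n-1)}(\Omega)$. To convert this $L^p$ decay into pointwise vanishing of the (monotone decreasing in $N$) sequence $\overline{(f-N)^+}(\,\cdot\,,\nu)$, I would estimate its integral over $\Gamma$ using the local isoperimetric inequality~\eqref{isoper} applied to the level sets $\{(f-N)^+>t\}$ inside half-balls centered at regular boundary points, and then invoke Fatou's lemma for the monotone limit.
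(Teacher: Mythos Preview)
Your reduction to $f\geq 0$ and your treatment of the bounded case are correct and essentially coincide with the paper's Lemma~\ref{bounded_trace}. The divergence from the paper, and the gap, is in the unbounded step.

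\medskip
\textbf{The gap.} You correctly identify that everything hinges on showing $\overline{(f-N)^+}(x,\nu)\to 0$ $\mu$-a.e., but the sketch you give does not establish this. Knowing that $\|(f-N)^+\|_{BV(\Omega)}$, $\tracenorm{(f-N)^+}$, and $\|(f-N)^+\|_{L^{n/(n-1)}}$ all tend to zero gives no direct control on the \emph{pointwise} $\limsup$ of half-ball averages along $\Gamma$. To pass from integral smallness to $\mu$-a.e.\ smallness of $\overline{(f-N)^+}(\cdot,\nu)$ you would need a weak-type maximal inequality of the form
\[
\mu\bigl(\{x\in\Gamma:\ \sup_{r>0}2v_n^{-1}r^{-n}\textstyle\int_{B_r^\nu(x)}g\,dy>\lambda\}\bigr)\ \leq\ C\lambda^{-1}\bigl(\|g\|_{BV(\Omega)}+\tracenorm{g}\bigr),
\]
or something equivalent. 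Your appeal to ``the local isoperimetric inequality~\eqref{isoper} applied to level sets inside half-balls'' and Fatou does not produce such a bound: inequality~\eqref{isoper} controls $\V(E_t\cap Q)$ by $P_Q(E_t)^{n/(n-1)}$, but you have no mechanism to sum these local perimeters over a family of half-balls covering $\Gamma$ without massive overlap, and Fatou goes the wrong way for a $\limsup$. In effect, proving the needed maximal bound is as hard as the theorem itself.

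\medskip
\textbf{What the paper does instead.} The paper avoids any maximal estimate by reducing the remainder to the \emph{already known} smooth-boundary case. After extending $f$ by zero to $BV(\mathbb R^n)$ via Lemma~\ref{lem-const-out} (this is where $\tracenorm f<\infty$ enters), it works locally on each $C^1$ surface $M_i$ carrying $\Gamma$. A small ball about $x\in M_i$ is split by $M_i$ into two smooth half-neighborhoods $U_1,U_2$; on each $U_j$ the equality of trace and average trace for $BV(U_j)$ is classical (\cite{M}, Theorem~6.6.2). Applying this to $f^C=f\cdot\chi_{\{f\geq C\}}$ gives $\overline{f^C}(x,\nu)=0$ at $\mu$-a.e.\ $x\in\Gamma\setminus\capU{E_C}$, whence for such $x$ with $f^\nu(x)<C$ one has $\overline f=\overline{f_C}+\overline{f^C}=(f_C)^\nu+0=f^\nu$. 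Letting $C\to\infty$ and using $\mu\bigl(\bigcap_{t>0}\capU{E_t}\bigr)=0$ (a consequence of $\tracenorm f<\infty$) finishes. The missing idea in your argument is precisely this reduction to the $C^1$ pieces $M_i$, which replaces your unproved maximal estimate by a citation.
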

If the  function is bounded, the proof is unexpectedly simple.
\begin{lemma}
\label{bounded_trace}
Let a function $f\in BV(\Omega)$ be bounded. Then the average trace
$\tilde f(x,\nu)$ of the function $f$ exists almost everywhere on  $\Gamma$
and coincides with  $\trN{f}{\nu}(x)$.
\end{lemma}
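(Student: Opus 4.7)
The plan is to squeeze $\trN{f}{\nu}(x)$ between $\underline{f}(x,\nu)$ and $\overline{f}(x,\nu)$ by invoking Lemma \ref{trace-lemma} twice: once for $f$ itself (after a shift to make it nonnegative) and once for the reflected function $M - f$, where $M = \sup f$. Boundedness of $f$ is exactly what makes $M - f$ nonnegative and locally summable, so this is precisely the place where the hypothesis enters the argument.

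First I would reduce to the case $0 \leq f \leq M$. Replacing $f$ by $f + c$ for a constant $c$ shifts $\underline{f}(x,\nu)$ and $\overline{f}(x,\nu)$ by $c$ by linearity of the integral, and shifts $\trN{f}{\nu}(x)$ by $c$ as well; the latter follows from Lemma \ref{lem_sum_trace} together with the elementary observation, straight from the definition of trace, that the trace of a constant function on $\Gamma$ equals that constant. After this reduction, Lemma \ref{trace-lemma} applied directly to $f$ yields $\underline{f}(x,\nu) \geq \trN{f}{\nu}(x)$ almost everywhere.

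Next, set $g := M - f \in \BVO$. Since $g \geq 0$ and is locally summable (being bounded), Lemma \ref{trace-lemma} applied to $g$ gives $\underline{g}(x,\nu) \geq \trN{g}{\nu}(x)$ a.e. By linearity of the integral, $\underline{g}(x,\nu) = M - \overline{f}(x,\nu)$, while Lemma \ref{lem_sum_trace} combined with Lemma \ref{minus_minus} yields $\trN{g}{\nu}(x) = M - \trN{f}{\nu}(x)$ a.e. Substituting and cancelling $M$ gives the matching upper bound $\overline{f}(x,\nu) \leq \trN{f}{\nu}(x)$ a.e.

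Together with the trivial inequality $\underline{f}(x,\nu) \leq \overline{f}(x,\nu)$, the two bounds force
\[
\overline{f}(x,\nu) \leq \trN{f}{\nu}(x) \leq \underline{f}(x,\nu) \leq \overline{f}(x,\nu),
\]
so all four quantities coincide almost everywhere on $\Gamma$, which is the stated conclusion. There is no substantial obstacle here; the only point that requires attention is checking that the trace operator commutes with the constant shift and with the sign change on a set of full $\mu$-measure, and this is furnished immediately by Lemmas \ref{lem_sum_trace} and \ref{minus_minus}. The simplicity of the proof reflects the fact that boundedness converts the one-sided estimate of Lemma \ref{trace-lemma} into a two-sided one by symmetry.
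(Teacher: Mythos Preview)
Your proposal is correct and follows essentially the same route as the paper's proof: shift $f$ by a constant to make it nonnegative, apply Lemma~\ref{trace-lemma} to get the lower bound $\trN{f}{\nu}(x)\leq\underline{f}(x,\nu)$, then apply the same lemma to the reflected function (the paper uses $-f$, you use $M-f$, which is the same thing after the shift) and invoke Lemmas~\ref{lem_sum_trace} and~\ref{minus_minus} to obtain the upper bound $\overline{f}(x,\nu)\leq\trN{f}{\nu}(x)$. The only cosmetic difference is that the paper performs the constant shift inline rather than as a preliminary reduction.
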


\begin{proof}[Proof of the lemma.]
Let $|f|<C$.
By Lemma \ref{sum_trace} and the equation \ref{trace-lemma} it follows that
$$
\trN{f}{\nu}(x)=\trN{(f+C)}{\nu}(x)+\trN{(-C)}{\nu}(x)\leq\underline{(f+C)}(x,\nu)-C=
\underline{f}(x,\nu).
$$
Applying this inequality to $-f$, we obtain
$$
\trN{(-f)}{\nu}(x)\leq
\underline{(-f)}(x,\nu).
$$
Thus, by Lemma \ref{minus_minus} for almost all
$$
\trN{f}{\nu}(x)\geq
\overline{f}(x,\nu)
$$
for almost all $x\in\Gamma$.
The lemma is proved.
\end{proof}

\begin{proof}[Proof of Theorem \ref{average tr=tr}.]
As usual we may assume $f\geq0$.
Let us extend $f\in BV(\Omega)$ by zero to  $\Rn$.
By Lemma \ref{lem-const-out} the extended function  $f$ belongs to $BV(\Rn)$.
Suppose that a function $f\in\BVO$ is unbounded. Let us consider the set
$E=\{x\in\Omega \mid f(x)>0\}$ and show that
 $\tilde f(x,\nu)=0$ for almost all
$x\in \Gamma\setminus\capU{E}$.
Recall that almost all points of  $\D\Omega$ are located on  $C^1$-smooth $(n-1)$-dimensional
surfaces $M_i$ and a standard vector field $\nu$ is continuous along each
$M_i$. For a point $x\in \Gamma\setminus\capU{E}$ denote by $M$ just the surface $M_i$
such that $x\in M_i$.
For any point $p\in M$, the surface  $M$ divides a small ball centered at
 $p$ onto two open sets,
$U_1$ and $U_2$. Denote
$\tilde M=\D U_1\cap\D U_2\subset M$. It suffices to prove that
 $\bar f(x,\nu)=0$ at all points
$x\in \tilde M$ such that  $\Theta_x^\nu(E)=\Theta_x^{-\nu}(E)=0$.
For the sake of definiteness,  let the normals
$\nu$ are directed inward of $U_1$.

It is known that for $U_1$ and  $U_2$, the average  trace of each function
  $f\in BV(U_i)$, $i=1,\,2$, equals to its trace
(see \cite{M}, Theorem 6.6.2 or \cite{BM}, Lemma 13).
From the other hand, the  trace equals zero at almost all
$x\in M\setminus(\D^*(E\cap U_1)\cap \D^*(E\cap U_1))$. Therefore, for $i=1,2$
\begin{equation}
0=\lim_{r\to0}\frac{\int_{U_i\cap B_r(x)}f\, dx}{\V(U_i\cap B_r(x))}=
\lim_{r\to0}2v_n^{-1}r^{-n}\int_{U_i\cap B_r(x)}f\, dx.
\end{equation}
Thus
\begin{equation}
\limsup_{r\to0}2v_n^{-1}r^{-n}\int_{B^\nu_r(x)}f\, dx
\leq
\limsup_{r\to0}2v_n^{-1}r^{-n}\int_{B_r(x)}f\, dx=0.
\end{equation}
\medskip

Define
\begin{align}
f_C(x)=&
\begin{cases}
f(x)&\mbox{if } f(x)<C,\\
0&\mbox{if } f(x)\geq C,
\end{cases}&
f^C(x)=&
\begin{cases}
0&\mbox{if } f(x)<C,\\
f(x)&\mbox{if } f(x)\geq C.
\end{cases}
\end{align}

Now for almost all
$x\in \Gamma\setminus\capU{E_C}$ such that $0<f^\nu (x)<C$, we have
\begin{equation}
\overline{f}(x,\nu)=
\overline{f_C}(x,\nu)+
\overline{f^C}(x,\nu)=
\trN{(f_C)}{\nu}(x)+
\trN{(f^C)}{\nu}(x)=
\trN{f}{\nu}(x)+0.
\end{equation}

Taking into account that $\Hn(\cap_{t>0}\capU{E_t})=0$, we see that the theorem is proved.

\end{proof}

\end{document}